\documentclass[12pt,twoside,english]{iopart}
\usepackage[T1]{fontenc}
\usepackage[latin9]{inputenc}
\usepackage{xcolor}
\usepackage{pdfcolmk}
\usepackage{babel}
\usepackage{mathrsfs}
\usepackage{amsthm}
\usepackage{amstext}
\usepackage{graphicx}
\usepackage{esint}
\PassOptionsToPackage{normalem}{ulem}
\usepackage{ulem}
\usepackage[unicode=true,
 bookmarks=true,bookmarksnumbered=false,bookmarksopen=false,
 breaklinks=false,pdfborder={0 0 1},backref=false,colorlinks=false]
 {hyperref}

\makeatletter

\providecolor{lyxadded}{rgb}{0,0,1}
\providecolor{lyxdeleted}{rgb}{1,0,0}

\usepackage{iopams}
\usepackage{setstack}
\theoremstyle{plain}
\newtheorem{thm}{\protect\theoremname}[section]
  \theoremstyle{plain}
  \newtheorem{lem}[thm]{\protect\lemmaname}
  \theoremstyle{plain}
  \newtheorem{cor}[thm]{\protect\corollaryname}
  \theoremstyle{remark}
  \newtheorem{claim}[thm]{\protect\claimname}

\usepackage{mathrsfs}

 \theoremstyle{remark}
 \newtheorem{condition}[thm]{Condition}
\usepackage{cite}

\usepackage{bbm}


\@ifundefined{showcaptionsetup}{}{%
 \PassOptionsToPackage{caption=false}{subfig}}
\usepackage{subfig}
\makeatother

  \providecommand{\claimname}{Claim}
  \providecommand{\corollaryname}{Corollary}
  \providecommand{\lemmaname}{Lemma}
\providecommand{\theoremname}{Theorem}

\begin{document}

\title[Identifying Conditional Probability Measures]{Inverse Problems for a Class of Conditional Probability Measure-Dependent
Evolution Equations}

\author{David M.~Bortz$^{1}$, Erin C.~Byrne$^{2}$, and Inom Mirzaev$^{1}$}

\address{$^{1}$ Department of Applied Mathematics, University of Colorado,
Boulder, CO 80309-0526\\
$^{2}$ The MathWorks, Inc., 3 Apple Hill Drive, Natick, MA 01760}

\ead{\mailto{dmbortz@colorado.edu}, \mailto{erin.byrne@mathworks.com}, \mailto{mirzaev@colorado.edu}}
\begin{abstract}
We investigate the inverse problem of identifying a conditional probability
measure in a measure-dependent dynamical system. We provide existence
and well-posedness results and outline a discretization scheme for
approximating a measure. For this scheme, we prove general method
stability. 

The work is motivated by Partial Differential Equation (PDE) models
of flocculation for which the shape of the post-fragmentation conditional
probability measure greatly impacts the solution dynamics. To illustrate
our methodology, we apply the theory to a particular PDE model that
arises in the study of population dynamics for flocculating bacterial
aggregates in suspension, and provide numerical evidence for the utility
of the approach.
\end{abstract}

\ams{35Q92, 35R30, 65M32, 92D25}

\noindent{\it Keywords\/}: {Identification of probability measures, inverse problem, measure-dependent
dynamical system, size-structured populations, flocculation, fragmentation,
bacterial aggregates}

\submitto{~}

\maketitle

\section{Introduction}

In this paper, we examine an inverse problem involving a general measure-dependent
partial differential equation (PDE). We consider a general abstract
evolution equation with solution $b$, depending on the conditional
probability measure $F$:
\begin{eqnarray}
\qquad b_{t} & = & g(b,\,F)\label{eq:AbstractEvEqn}\\
b(0,x) & = & b_{0}(x)\label{eq:AbstractEvEqnIC}
\end{eqnarray}

\noindent for $t\in T=[0,\,t_{f}]$ with $t_{f}<\infty$. As investigated
in our previous work \cite{Bortz2008,Mirzaev2015b}, the function
space for both the initial condition $b_{0}(\cdot)$ and the solution
$b(t,\,x)$ is $H$$=L^{1}(Q,\,\mathbb{R}^{+})$, where $Q=[0,\,\overline{x}]$,
$\overline{x}\in\mathbb{R}^{+}$ and $g:H\times\mathscr{F}\to H$.

This study of this class of models is motivated by our interest in
studying fragmentation phenomena, which arise in a wide variety of
areas including size structured algal populations \cite{AcklehFitzpatrick1997,Ackleh1997,Lamb2009},
cancer metastases \cite{DeVitaTheodore2008,IlanaElkinbVlodavsk2006,Wyckoff2000},
and mining \cite{Gamma1993,Persson1994}. In \cite{Bortz2008}, we
developed a size-structured partial differential equation (PDE) model
for bacterial \emph{flocculation}, the process whereby aggregates,
i.e., \emph{flocs}, in suspension adhere and separate. For the breakage
term in that PDE model each fragmentation event will generate child
particles according to a \emph{post-fragmentation probability distribution}.
In the literature, it is widespread to assume that this distribution
is independent of parent floc size and is normally distributed. However,
in \cite{ByrneEtal2011}, we focused only on the fragmentation and
developed a microscale mathematical model which contradicts this result
and predicted that the distribution is both dependent on parent size
and non-normal. Thus it is clear that there is a need for a methodology
to identify this conditional distribution from available data.

In this work, we present and investigate an inverse problem for estimating
the conditional probability measures from size-distribution measurements.
We use the Prohorov metric (convergence in which is equivalent to
weak convergence of measures) in a functional-analytic setting and
show well-posedness of the inverse problem. We develop an approximation
approach for computational implementation and show well-posedness
of this approximate inverse problem. We also show the convergence
of solutions to the approximate inverse problem to solutions of the
original inverse problem. Our approach is inspired by that for identifying
a single probability measure in Banks and Bihari \cite{BanksBihari2001}
and a countable number of probability measures in Banks and Bortz
\cite{BanksBortz2005JIIP}. The primary contribution of this work
is to extend this theory to conditional probability measures. We also
illustrate that the flocculation dynamics of bacterial aggregates
in suspension is one realization of systems satisfying the hypotheses
in our framework.

\section{Well-Posedness of the Inverse Problem}

We begin by considering the model in (\ref{eq:AbstractEvEqn})-(\ref{eq:AbstractEvEqnIC}).
In this section, we will develop the theoretical results needed to
prove the well-posedness of the inverse problem.

Note that our eventual goal is to infer the post-fragmentation distribution
$F(x,\,y)$ from laboratory data. Accordingly, we will make some assumptions
which are driven by the features of the available validating data.

\subsection{Theoretical framework}

Let $\mathscr{P}(Q)$ be the space of all probability distributions
on $(Q,\,\mathscr{A})$, where $\mathscr{A}$ is the Borel $\sigma$-algebra
on $Q$. Since we are primarily concerned with the system in (\ref{eq:AbstractEvEqn})-(\ref{eq:AbstractEvEqnIC}),
we restrict the space of probability distributions to those that can
be solutions to our inverse problem. A fragmentation cannot result
in a daughter floc larger than the original floc, therefore we consider
the subset $\mathscr{P}_{y}(Q)\subset\mathscr{P}(Q)$ such that $F(x,\,y)\in\mathscr{P}_{y}(Q)$
if $F(x,\,y)\equiv1$ for $x\geq y$ and fixed $y\in Q$. We also
restrict our solutions to piecewise absolutely continuous (PAC) functions
with a finite number of discontinuities in $x$ for a fixed $y$.
An illustration of the domain and an example using a Beta distribution
($\alpha=\beta=2$) is depicted in Figures \ref{fig:Domain} and \ref{fig:Domain-1}.
Note that in Figure \ref{fig:Domain}, the upper left corner of the
domain admits values of $F(x,\,y)$ between 0 and 1, and the lower
right requires $F(x,\,y)\equiv1$. We then define our space of solutions
to the inverse problem as $\mathscr{F}(Q\times Q)$, the space of
all PAC functions with a finite number of discontinuities in $x$
such that $F(x,\,y)\in\mathscr{P}_{y}(Q)$ for any fixed $y$.
\begin{figure}[ht]
\centering{}\subfloat[\label{fig:Domain}]{\protect\includegraphics[scale=0.4]{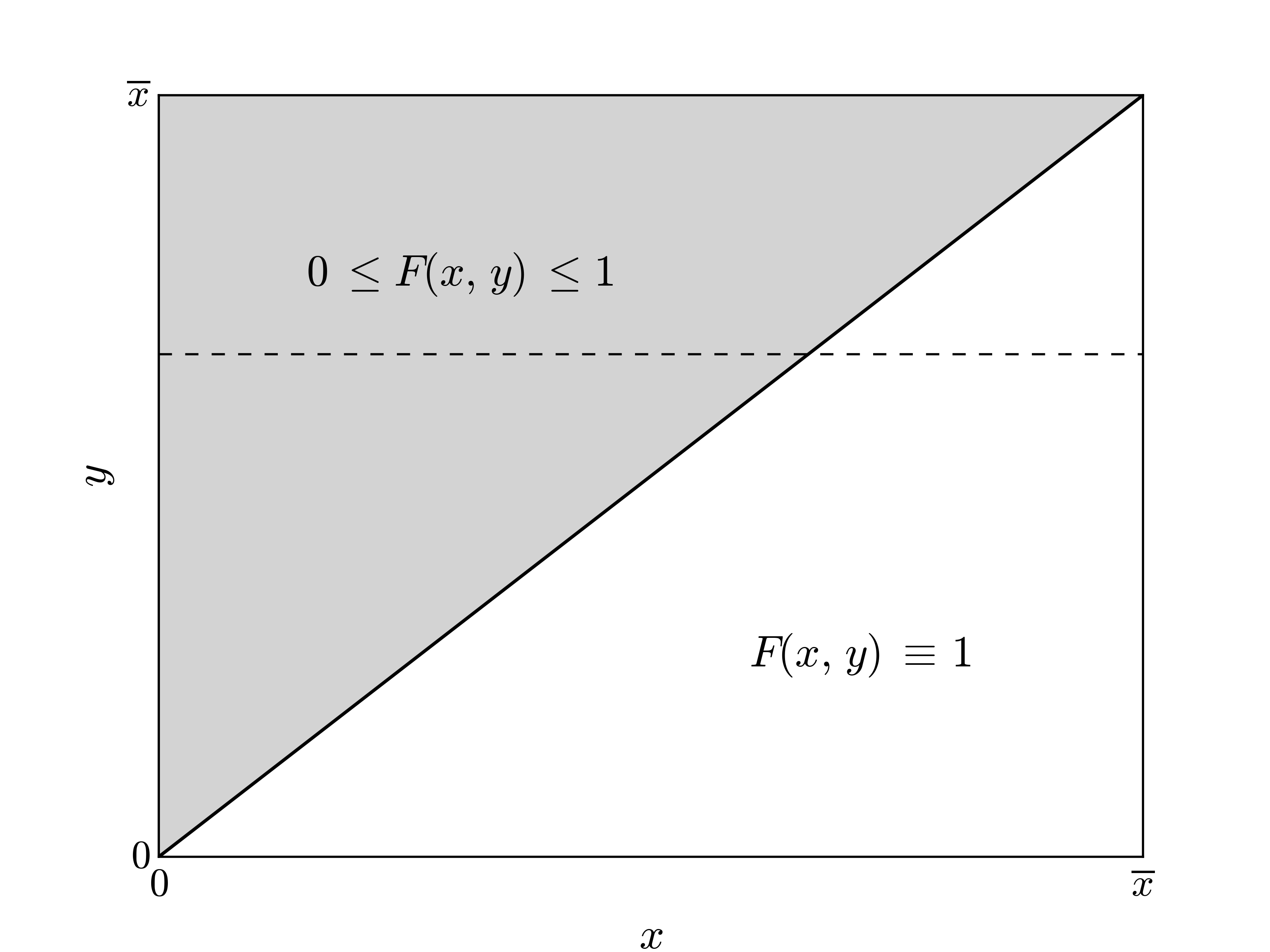}}~~\subfloat[\label{fig:Domain-1}]{\protect\includegraphics[scale=0.4]{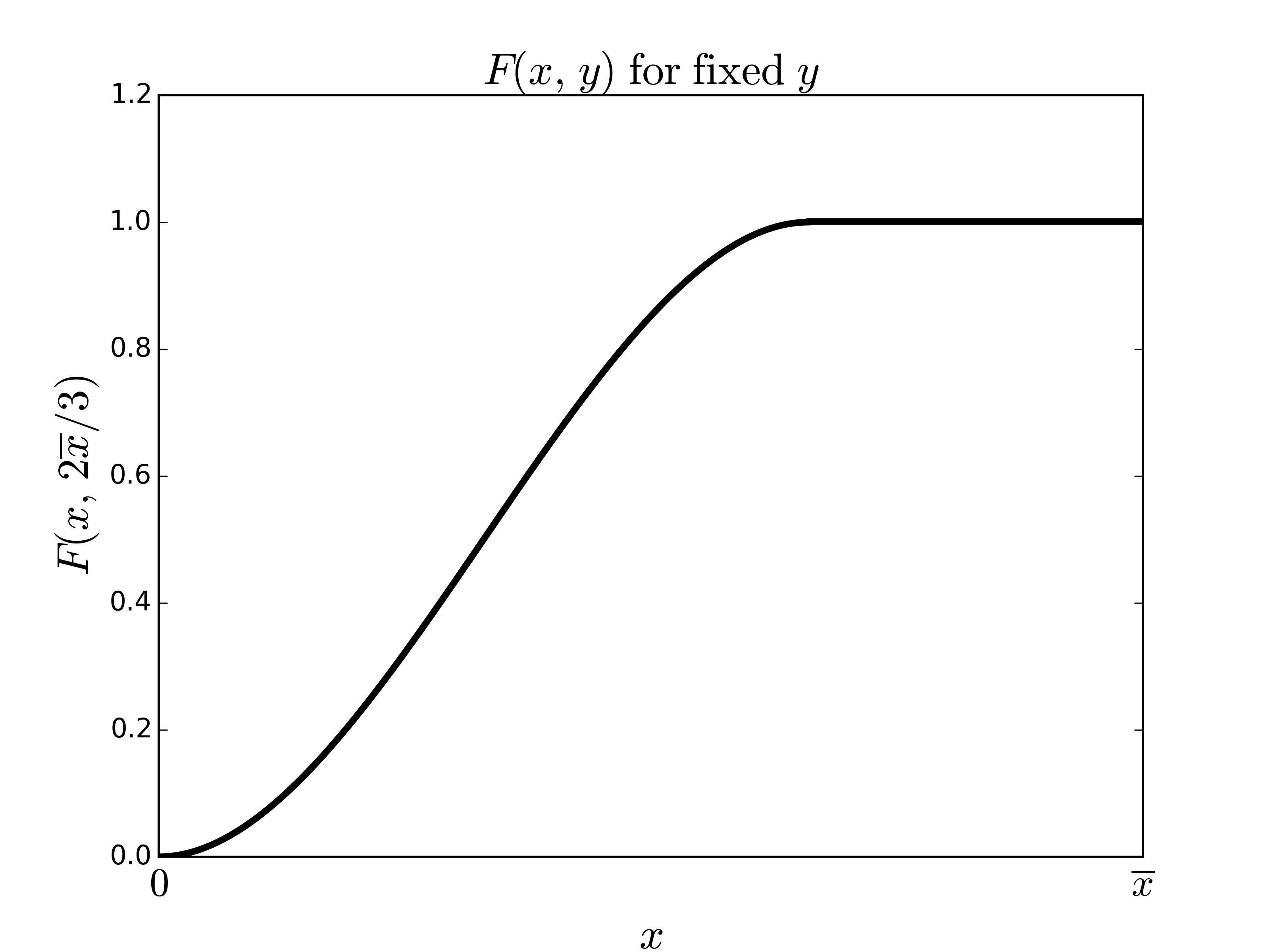}}\protect\caption{(a) Domain for the probability measure $F(x,\,y)$ showing admissible
values for $F(x,\,y)\subset\mathscr{P}(Q)$. Dotted line is for $y=2\overline{x}/3$.
(b) Example $F(x,\,2\overline{x}/3)\subset\mathscr{P}(Q)$ for fixed
$y$. In this case, $\Gamma$ is a Beta distribution with $\alpha=\beta=2$ }
\end{figure}

We define a metric on the space $\mathscr{F}$ to create a metric
topology, and we accomplish this by making use of the well-known Prohorov
metric (see \cite{Billingsley1968} for a full description). Convergence
in the Prohorov metric is equivalent to weak convergence, and we direct
the interested reader to \cite{Gibbs2009} for a summary of its relationship
to a variety of other metrics on probability measures. For $F,\,\tilde{F}\in\mathscr{F}$
and fixed $y$, we use the Prohorov metric $\rho_{Proh}$ to denote
the distance $\rho_{Proh}(F(\cdot,y),\,\tilde{F}(\cdot,y))$ between
the measures. We extend this concept to define the metric $\rho$
on the space $\mathscr{F}(Q\times Q)$ by taking the supremum of $\rho_{Proh}$
over all $y\in Q$, 
\[
\rho(F,\,\tilde{F})=\sup_{y\in Q}\,\rho_{Proh}(F(\cdot,\,y),\,\tilde{F}(\cdot,\,y))\,.
\]

The most widely available, high-fidelity data for flocculating particles
are in the form of particle size histograms from, e.g., from flow-cytometers,
Coulter counters, etc. Accordingly, we will define our inverse problem
with the goal of comparing with histograms of floc sizes. Let $n_{j}(t_{i})$
represent the number of flocculated biomasses with volume between
$x_{j}$ and $x_{j+1}$ at time $t_{i}$. We assume that the data
is generated by an actual post-fragmentation function. In other words,
$\mathbf{n}^{d}$ is representable as the partial zeroth moment of
the solution 
\[
n_{ji}^{d}=\int_{x_{j-1}}^{x_{j}}b(t_{i},\,x;F_{0})\,dx+\mathcal{E}_{ji}
\]
for some \emph{true} probability-measure $F_{0}\in\mathscr{F}$. The
random variables $\mathcal{E}_{ji}$ represent measurement noise.
We also assume, as it is commonly assumed in statistics, that the
random variables $\mathcal{E}_{ji}$ are independent, identically
distributed, $E[\mathcal{E}_{ji}]=0$ and $Var[\mathcal{E}_{ji}]=\sigma^{2}<\infty$
(which is generally true for flow-cytometers \cite{Darzynkiewicz}).
Thus our inverse problem entails finding a minimizer of the least
squares cost functional, defined as

\begin{equation}
\min_{F\in\mathscr{F}}\;J(F;\,\mathbf{n}^{d})=\min_{F\in\mathscr{F}}\;\sum_{i=1}^{N_{t}}\sum_{j=1}^{N_{x}}\left(\int_{x_{j-1}}^{x_{j}}b(t_{i},x;F)\,dx\,-\,n_{ji}^{d}\right)^{2}\,,\label{eq:OrigInv}
\end{equation}

\noindent where the data $\mathbf{n}^{d}\in\mathbb{R}^{N_{x}\times N_{t}}$
consists of the number of flocs in each of the $N_{x}$ bins for floc
volume at $N_{t}$ time points. The superscript $d$ denotes the dimension
of the data, $d=N_{x}\times N_{t}$. The function $b$ is the solution
to (\ref{eq:AbstractEvEqn})-(\ref{eq:AbstractEvEqnIC}) corresponding
to the probability measure $F$. 

For a given data $\mathbf{n}^{d}$, the cost function $J$ may not
have a unique minimizer, thus we denote a corresponding solution set
of probability distributions as $\mathscr{F}^{*}(\mathbf{n}^{d}).$
We then define the distance between two such sets of solutions, $\mathscr{F}^{*}(\mathbf{n}_{1}^{d_{1}})$
and $\mathscr{F}^{*}(\mathbf{n}_{2}^{d_{2}})$ (for data $\mathbf{n}_{1}^{d_{1}}$
and $\mathbf{n}_{2}^{d_{2}}$) to be the well-known Hausdorff distance
\cite{Kelley1955}
\[
d_{H}(\mathscr{F}^{*}(\mathbf{n}_{1}^{d_{1}}),\,\mathscr{F}^{*}(\mathbf{n}_{2}^{d_{2}}))=\inf\{\rho(F,\tilde{\,F}):\;F\in\mathscr{F}^{*}(\mathbf{n}_{1}^{d_{1}}),\;\tilde{F}\in\mathscr{F}^{*}(\mathbf{n}_{2}^{d_{2}})\}\,.
\]

\subsection{Inverse Problem}

\noindent In this section we will establish well-posedness of the
inverse problem defined in (\ref{eq:OrigInv}). In particular, we
will first show that for a given data $\mathbf{n}^{d}$ with dimension
$d$ the least squares estimator defined in (\ref{eq:OrigInv}) has
at least one minimizer. Next, we will investigate the behavior of
minimizers of (\ref{eq:OrigInv}) as more data is collected. Specifically,
we will show that the least squares estimator is consistent, i.e.,
as the dimension of data increases ($N_{t}\to\infty$ and $N_{x}\to\infty$)
the minimizers of the estimator (\ref{eq:OrigInv}) converge to \emph{true}
probability measure $F_{0}$ generating the data $\mathbf{n}^{d}$.

\subsubsection{Existence of the estimator.}

\noindent In this section we prove that the cost functional defined
in (\ref{eq:OrigInv}) possesses at least one minimizer. We use the
well-known result that a continuous function on a compact metric space
has a minimum. In particular, first we show that $(\mathscr{F},\,\rho)$
is a compact metric space. Next, we establish continuous dependence
of the solution $b$ on the conditional probability measure $F$. 

For much of the following analysis, we require the operator $g$ to
satisfy a Lipschitz-type condition. We detail that condition in the
following.

\begin{condition}\label{con:gLipschitz} Suppose that $b$ and $\tilde{b}$
are solutions to the evolution equation (\ref{eq:AbstractEvEqn})-(\ref{eq:AbstractEvEqnIC}).
For fixed $t$, the function $g:\,H\times\mathscr{F}\to H$ must satisfy
\[
\left\Vert g(b,\,F)-g(\tilde{b},\,\tilde{F})\right\Vert \leq C\left\Vert b-\tilde{b}\right\Vert +\mathscr{T}(F,\tilde{\,F}),
\]
 where $C>0$, and $\mathscr{T}(F,\,\tilde{F})$ is some function
such that $|\mathscr{T}(F,\,\tilde{F})|<\infty$ and $\mathscr{T}(F,\,\tilde{F})\to0$
as $\rho(F,\,\tilde{F})\to0$.\end{condition}

We begin by proving that $(\mathscr{F},\,\rho)$ is a compact metric
space.
\begin{lem}
$(\mathscr{F},\,\rho)$ is a compact metric space. \end{lem}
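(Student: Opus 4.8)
The plan is to establish compactness of $(\mathscr{F},\rho)$ by combining the compactness of the space of probability measures under the Prohorov metric with an Arzel\`a--Ascoli-type argument to handle the supremum over the conditioning variable $y$. Since $Q=[0,\overline{x}]$ is a compact metric space, the space $\mathscr{P}(Q)$ equipped with the Prohorov metric $\rho_{Proh}$ is itself compact; this is the classical result (see \cite{Billingsley1968}) that weak convergence of probability measures on a compact Polish space is metrizable by the Prohorov metric and that the resulting space is compact. The subset $\mathscr{P}_y(Q)$, consisting of those distributions satisfying the constraint $F(x,y)\equiv 1$ for $x\geq y$, is closed in $\mathscr{P}(Q)$ (the constraint is preserved under weak limits because it amounts to requiring unit mass on $[0,y]$, a closed set), hence $\mathscr{P}_y(Q)$ is itself compact for each fixed $y$.

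The main work is to pass from fiberwise compactness to compactness of $\mathscr{F}(Q\times Q)$ under the sup-metric $\rho$. First I would verify that $\rho$ is genuinely a metric: symmetry and nonnegativity are inherited from $\rho_{Proh}$, and the triangle inequality follows by taking suprema in the fiberwise triangle inequalities; one must also confirm that the supremum over $y\in Q$ is finite, which holds because $\rho_{Proh}$ is bounded (by $1$, or by $\operatorname{diam}(Q)$ depending on the normalization). Given an arbitrary sequence $\{F_n\}\subset\mathscr{F}$, the plan is to extract a subsequence converging in $\rho$. The natural strategy is a diagonal argument: enumerate a countable dense set $\{y_k\}\subset Q$, use compactness of $\mathscr{P}_{y_k}(Q)$ to extract nested subsequences so that $F_n(\cdot,y_k)$ converges in $\rho_{Proh}$ for each $k$ along the diagonal subsequence, and then upgrade pointwise-in-$y$ convergence to uniform-in-$y$ convergence. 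The upgrade step requires an equicontinuity property of the maps $y\mapsto F_n(\cdot,y)$ into $(\mathscr{P}(Q),\rho_{Proh})$ that is uniform in $n$; this is where the structural restriction to piecewise absolutely continuous functions with finitely many discontinuities, together with the constraint encoded in $\mathscr{P}_y(Q)$, must be invoked to control how rapidly the fiber measures can vary with $y$.

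I expect the equicontinuity-in-$y$ step to be the principal obstacle. Without some uniform modulus of continuity for the family $\{y\mapsto F_n(\cdot,y)\}$, a diagonal subsequence gives only pointwise convergence on the dense set, which is insufficient for convergence in the sup-metric $\rho$; indeed the sup-metric is exactly the uniform metric on maps $Q\to(\mathscr{P}(Q),\rho_{Proh})$, so one effectively needs an Arzel\`a--Ascoli theorem with values in the compact metric space $(\mathscr{P}(Q),\rho_{Proh})$. The cleanest route is to establish that the monotonicity of $F(x,y)$ in both arguments (each $F(\cdot,y)$ is a cumulative distribution function, and the mass is confined to $[0,y]$) forces a uniform relationship between closeness of $y$-values and closeness of the corresponding measures in the Prohorov metric, thereby yielding the required uniform equicontinuity. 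Once equicontinuity and pointwise convergence on a dense set are in hand, uniform convergence on the compact $Q$ follows by the standard three-$\varepsilon$ argument, and the limit again satisfies the defining constraints of $\mathscr{F}$ by closedness, completing the proof that every sequence has a convergent subsequence and hence that $(\mathscr{F},\rho)$ is (sequentially, thus) compact.
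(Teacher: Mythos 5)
Your route differs from the paper's. The paper argues that $(\mathscr{F},\rho)$ is complete (a $\rho$-Cauchy sequence is fiberwise Cauchy; the fiberwise limits in the compact spaces $\mathscr{P}_{y}(Q)$ assemble into a limit that is attained uniformly in $y$) and then asserts total boundedness from compactness of $Q\times Q$ together with the bound $0\leq F\leq1$. You instead aim for sequential compactness directly: a diagonal extraction over a countable dense set of $y$-values using fiberwise compactness of $\mathscr{P}_{y}(Q)$, followed by an Arzel\`a--Ascoli upgrade from pointwise-in-$y$ to uniform-in-$y$ convergence. The first half of your plan is sound, but the upgrade step is a genuine gap, and the repair you sketch does not work. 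Membership in $\mathscr{F}$ constrains $F$ only in the $x$-variable (each $F(\cdot,y)$ is a piecewise absolutely continuous distribution function placing unit mass on $[0,y]$); it imposes no regularity whatsoever on the map $y\mapsto F(\cdot,y)$, and $F(x,y)$ is monotone only in $x$, not in $y$. Hence the family $\{y\mapsto F_{n}(\cdot,y)\}$ need not be equicontinuous. Concretely, let $F_{n}(\cdot,y)$ be the distribution function of $\delta_{0}$ for $y\leq\overline{x}/2-1/n$ and of $\delta_{y/2}$ for $y>\overline{x}/2-1/n$. Each $F_{n}$ lies in $\mathscr{F}$, the fiberwise limits exist for every $y\in Q$, yet no subsequence converges in $\rho$: at $y$ slightly below $\overline{x}/2$ the Prohorov distance to the pointwise limit stays near $\overline{x}/4$. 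So pointwise convergence on a dense set cannot be upgraded, and your argument cannot be completed from the stated definition of $\mathscr{F}$.

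It is worth emphasizing that the obstruction you identified is real and not an artifact of your method: the same example shows that compactness requires some uniform control in the $y$-direction that the definition of $\mathscr{F}$ does not supply, so any complete proof must either add such a hypothesis (for instance a common modulus of continuity for $y\mapsto F(\cdot,y)$ in $\rho_{Proh}$, under which your Arzel\`a--Ascoli plan would go through essentially verbatim) or further restrict $\mathscr{F}$. The paper's proof does not engage with this point; its total-boundedness step is asserted rather than derived, and that is exactly where your equicontinuity difficulty reappears. In short, you located the crux correctly, but the resolution you propose (monotonicity in $y$) is false, and the step remains unproved.
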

\begin{proof}
Consider a Cauchy sequence $\{F_{n}\}\in\mathscr{F}$. Then $\forall\;\epsilon>0,\;\exists\,N$
such that $\forall\,n,m\geq N$, 
\[
\sup_{y\in Q}\;\rho_{Proh}\left(F_{n}(\cdot,\,y),\,F_{m}(\cdot,\,y)\right)\,<\,\epsilon.
\]
It is easy to see we have a Cauchy sequence $\{F_{n}(\cdot,\,y)\}\in\mathscr{P}_{y}(Q)$
which converges uniformly in $y\in Q$. From results in Billingsley
\cite{Billingsley1968}, $\mathscr{P}_{y}(Q)$ is a compact metric
space, there exists $F(\cdot,\,y)\in\mathscr{P}_{y}(Q)$ such that
$\rho_{Proh}(F_{n}(\cdot,\,y),\,F(\cdot,\,y))<\epsilon$ for all $n\geq N$.
Thus 
\[
\sup_{y\in Q}\;\rho_{Proh}\left(F_{n}(\cdot,\,y),\,F(\cdot,\,y)\right)\,<\,\epsilon
\]
and $(\mathscr{F},\,\rho)$ is a complete metric space. In addition,
since $Q\times Q$ is compact and $0\leq F(x,\,y)\leq1$ for all $(x,\,y)\in Q\times Q,\,F\in\mathscr{F}$,
$\mathscr{F}(Q\times Q)$ is totally bounded and therefore $(\mathscr{F},\,\rho)$
is a compact metric space.
\end{proof}
\noindent Now that we have a compact metric space, it remains to show
that the cost functional on that space is continuous with respect
to the function $F$. It suffices to prove point-wise continuity.
\begin{lem}
\noindent \label{lem:bcontF}If $t\in T$, $F\in\mathscr{F}$, and
the operator $g$ in (\ref{eq:AbstractEvEqn}) satisfies Condition~\ref{con:gLipschitz},
then the unique solution $b$ to (\ref{eq:AbstractEvEqn}) is point-wise
continuous at $F\in\mathscr{F}.$ Moreover, since $\mathscr{F}$ is
compact space the unique solution $b$ is uniformly continuous on
$\mathscr{F}$.\end{lem}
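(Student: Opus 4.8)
The plan is to recast the evolution equation in its mild (integral) form and control the difference of two solutions with a Gr\"onwall argument. Fix $t\in T$ and let $b(\cdot,\cdot;F)$ and $b(\cdot,\cdot;\tilde F)$ denote the unique solutions corresponding to $F,\tilde F\in\mathscr{F}$. Writing each in Duhamel form,
\[
b(t,\cdot;F)=b_{0}+\int_{0}^{t}g\bigl(b(s,\cdot;F),\,F\bigr)\,ds ,
\]
and subtracting the analogous identity for $\tilde F$, I would set $\phi(t)=\bigl\|b(t,\cdot;F)-b(t,\cdot;\tilde F)\bigr\|_{H}$ and estimate $\phi$ directly.

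First I would apply Condition~\ref{con:gLipschitz} inside the integral. Since the term $\mathscr{T}(F,\tilde F)$ is independent of the integration variable and $t\le t_{f}$, this produces
\[
\phi(t)\le\int_{0}^{t}\Bigl(C\,\phi(s)+\mathscr{T}(F,\tilde F)\Bigr)\,ds\le C\int_{0}^{t}\phi(s)\,ds+t_{f}\,\mathscr{T}(F,\tilde F).
\]
Gr\"onwall's inequality then gives the key bound
\[
\phi(t)\le t_{f}\,\mathscr{T}(F,\tilde F)\,e^{Ct}\le t_{f}\,e^{Ct_{f}}\,\mathscr{T}(F,\tilde F),
\]
valid uniformly for $t\in T$, with the constant $t_{f}e^{Ct_{f}}$ independent of $t$, $F$, and $\tilde F$.

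Second, I would invoke the defining property of $\mathscr{T}$, namely $\mathscr{T}(F,\tilde F)\to0$ as $\rho(F,\tilde F)\to0$. The displayed bound then forces $\phi(t)\to0$, i.e.\ $b(t,\cdot;\tilde F)\to b(t,\cdot;F)$ in $H$ as $\tilde F\to F$, which is exactly pointwise continuity of $F\mapsto b(t,\cdot;F)$ at each $F\in\mathscr{F}$ (and, because the constant does not depend on $t$, continuity uniform in $t$ as well). Finally, uniform continuity on $\mathscr{F}$ follows at once: by the preceding lemma $(\mathscr{F},\rho)$ is a compact metric space, so a continuous map on it is uniformly continuous by the Heine--Cantor theorem.

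The main obstacle is not the Gr\"onwall estimate, which is routine once Condition~\ref{con:gLipschitz} is in hand, but rather securing the hypotheses that make it close: that the mild-solution representation and the existence and uniqueness of $b$ are available (as established in the cited prior work), and that the Lipschitz constant $C$ and the remainder $\mathscr{T}$ are genuinely uniform, so that a single modulus of continuity governs all pairs $(F,\tilde F)$ with $\rho(F,\tilde F)$ small. The decisive structural feature is that $\mathscr{T}$ depends only on $(F,\tilde F)$ and not on the state $b$; this decouples the measure dependence from the dynamics and is precisely what allows the $F$-continuity to be read off the Gr\"onwall bound.
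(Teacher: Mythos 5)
Your proposal is correct and follows essentially the same route as the paper: recast (\ref{eq:AbstractEvEqn}) in integral form, apply Condition~\ref{con:gLipschitz} to bound the difference of solutions, and close with Gr\"onwall to get $\left\Vert b(t,\cdot;F)-b(t,\cdot;\tilde F)\right\Vert \le t_{f}\,e^{Ct_{f}}\,\mathscr{T}(F,\tilde F)\to0$. Your explicit appeal to Heine--Cantor for the uniform continuity on the compact space $(\mathscr{F},\rho)$ is a small addition the paper leaves implicit, but the argument is the same.
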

\begin{proof}
For the function $b$ to be point-wise continuous at $F$, we need
to show that $\left\Vert b(t,\cdot;F_{i})-b(t,\cdot;F)\right\Vert \to0$
as $\rho(F_{i},F)\to0$ for $\{F_{i}\}\in\mathscr{F}$ and fixed $t$.
We begin by re-writing (\ref{eq:AbstractEvEqn}) as an integral equation
\[
b(t,x)=b_{0}(x)+\int_{0}^{t}g(b(s,x),F)ds\,.
\]
For fixed $t$, consider $b$ to be a function of $F$ 
\[
b(t,x;F)=b_{0}(x)+\int_{0}^{t}g(b(s,x;F),F)ds\,.
\]
By definition of solutions, we have 
\[
\left\Vert b(t,\cdot;F_{i})-b(t,\cdot;F)\right\Vert \leq\int_{0}^{t}\left\Vert g(b(s,\cdot;F_{i}),F_{i})-g(b(s,\cdot;F),F)\right\Vert ds\,.
\]
Based on Condition \ref{con:gLipschitz}, we obtain 
\[
\left\Vert b(t,\cdot;F_{i})-b(t,\cdot;F)\right\Vert \leq C\int_{0}^{t}\left\Vert b(s,\cdot;F_{i})-b(s,\cdot;F)\right\Vert ds+\mathcal{T}(F_{i},F)\,,
\]
where we define $\mathcal{T}(F_{i},F)=\int_{0}^{t_{f}}\mathscr{T}(F_{i},F)ds$,
independent of $t$. An application of Gronwall's inequality yields
\[
\left\Vert b(t,\cdot;F_{i})-b(t,\cdot;F)\right\Vert \leq\mathcal{T}(F_{i},F)\,e^{\int_{0}^{t}C\,ds}\leq\mathcal{T}(F_{i},F)\,e^{C\,t_{f}}\to0
\]
since we know that $\mathcal{T}(F_{i},F)\to0$ as $F_{i}\to F$ in
$(\mathscr{F},\,\rho)$. Thus the solutions $b$ are point-wise continuous
at $F\in\mathscr{F}$. 
\end{proof}
We use the results of the above two lemmas to establish existence
of a solution to our inverse problem.
\begin{thm}
\label{thm:OrigExist}There exists a solution to the inverse problem
as described in (\ref{eq:OrigInv}).\end{thm}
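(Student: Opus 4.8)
The plan is to invoke the standard extreme value theorem: a continuous real-valued function on a compact metric space attains its minimum. The two preceding lemmas furnish exactly the two ingredients needed. First, the earlier lemma establishes that $(\mathscr{F},\,\rho)$ is a compact metric space, so the domain of minimization is compact. Second, Lemma~\ref{lem:bcontF} establishes that the solution map $F\mapsto b(t,\cdot;F)$ is continuous from $(\mathscr{F},\,\rho)$ into $H=L^{1}(Q,\,\mathbb{R}^{+})$. Thus the entire argument reduces to verifying that the cost functional $J(\cdot;\,\mathbf{n}^{d})$ is continuous on $(\mathscr{F},\,\rho)$, after which the conclusion is immediate.

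To establish continuity of $J$, I would build it up from the solution map by composition with continuous operations. Fix a sequence $\{F_{i}\}\subset\mathscr{F}$ with $\rho(F_{i},\,F)\to0$. By Lemma~\ref{lem:bcontF}, $\left\Vert b(t_{i},\cdot;F_{i})-b(t_{i},\cdot;F)\right\Vert_{L^{1}(Q)}\to0$ for each fixed time node $t_{i}$. The inner quantity appearing in the cost functional is the partial zeroth moment $\int_{x_{j-1}}^{x_{j}}b(t_{i},x;F)\,dx$, which is a bounded linear functional on $L^{1}(Q)$ (it is integration against the indicator of the bin $[x_{j-1},\,x_{j}]$, whose $L^{\infty}$ norm is $1$). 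Hence
\[
\left|\int_{x_{j-1}}^{x_{j}}b(t_{i},x;F_{i})\,dx-\int_{x_{j-1}}^{x_{j}}b(t_{i},x;F)\,dx\right|\leq\left\Vert b(t_{i},\cdot;F_{i})-b(t_{i},\cdot;F)\right\Vert_{L^{1}(Q)}\to0,
\]
so each partial moment depends continuously on $F$. Each summand of $J$ is then the square of the difference between such a continuous map and the fixed datum $n_{ji}^{d}$, and squaring is continuous; since the sum in (\ref{eq:OrigInv}) is finite (over $i=1,\dots,N_{t}$ and $j=1,\dots,N_{x}$), $J(\cdot;\,\mathbf{n}^{d})$ is a finite sum of continuous functions and is therefore continuous on $(\mathscr{F},\,\rho)$.

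With $J$ continuous and $(\mathscr{F},\,\rho)$ compact, $J$ attains its infimum at some $F^{*}\in\mathscr{F}$, which is a minimizer and hence a solution to the inverse problem (\ref{eq:OrigInv}). I do not expect any genuine obstacle here, since the two lemmas do the heavy lifting; the only point requiring any care is the bookkeeping that links the $L^{1}$-convergence of the solution map to convergence of the partial zeroth moments, which is precisely the elementary duality bound written above. The finiteness of the double sum is essential — it is what lets continuity pass through the summation without any uniformity argument — and is guaranteed by the fixed data dimension $d=N_{x}\times N_{t}$.
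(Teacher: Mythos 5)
Your proposal is correct and follows essentially the same route as the paper: compactness of $(\mathscr{F},\rho)$ from the earlier lemma, continuity of $F\mapsto b(t,\cdot;F)$ from Lemma~\ref{lem:bcontF}, and then the extreme value theorem. The only difference is that you explicitly verify the continuity of $J$ via the $L^{1}$--$L^{\infty}$ duality bound on the partial zeroth moments and the finiteness of the double sum, a step the paper simply asserts; this is a welcome addition but not a different argument.
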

\begin{proof}
It is well known that a continuous function on a compact set obtains
both a maximum and a minimum. We have shown $(\mathscr{F},\rho)$
is compact, and from Lemma~\ref{lem:bcontF}, for fixed $t\in T,$
we have that $F\mapsto b(t,\cdot;F)$ is continuous. Since $J$ is
continuous with respect to $F$ and we can conclude there exist minimizers
for $J$.
\end{proof}

\subsubsection{Consistency of the estimator.}

In previous section we have proved that for a given data there exists
estimators for the least squares problem. In this section we will
investigate the behavior of the least squares estimators as the number
of observations increase. In particular, the estimator is said to
be \emph{consistent} if the estimators for the data $\mathbf{n}^{d}$
converge to \emph{true} probability measure $F_{0}$ as $N_{t}\to\infty$
and $N_{x}\to\infty$. Consistency of the estimators of the least
squares problems are well-studied in the statistics and the results
of this section follow closely the theoretical results of \cite{BanksThompson2012}
and \cite{htb1990}. Hence, as in \cite[Theorem 4.3]{BanksThompson2012}
and \cite[Corallary 3.2]{htb1990}, we will make the following two
assumptions required for the convergence of the estimators to the
unique \emph{true} probability measure $F_{0}$. \begin{itemize}

\item[(\textbf{A}1)]Let us denote the space of positive functions
$T\times Q\mapsto\mathbb{R}^{+}$, which are bounded and Riemann integrable
by $\mathscr{R}\left(T\times Q,\,\mathbb{R}^{+}\right)$. Then, the
model function $b(t,x;\,\cdot)\,:\,\mathscr{F}\to\mathscr{R}\left(T\times Q,\,\mathbb{R}^{+}\right)$
is continuous on $\mathscr{F}$. 

\item[(\textbf{A}2)]The functional 
\[
J_{0}(F)=\sigma^{2}+\int_{T}\int_{Q}\left(b(t,x;F)-b(t,x;F_{0})\right)^{2}\,dx\,dt
\]
is uniquely (up to $L^{1}$ norm) minimized at $F_{0}\in\mathscr{F}$.\end{itemize}

Having the required assumptions in hand, we now present the following
theorem.
\begin{thm}
\label{thm:Consistency of the estimator}Under assumptions (\textbf{A}1)
and (\textbf{A}2) 
\[
d_{H}\left(\mathscr{F}^{*}(\mathbf{n}^{d}),\,F_{0}\right)\to0
\]
as $N_{t}\to\infty$ and $N_{x}\to\infty$. \end{thm}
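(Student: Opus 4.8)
The plan is to prove consistency by the standard argmin (M-estimator) strategy used in \cite{BanksThompson2012} and \cite{htb1990}: show that a suitably normalized version of the empirical cost converges, almost surely and uniformly over the compact parameter space $(\mathscr{F},\rho)$, to the limiting functional $J_0$ of (\textbf{A}2), and then use compactness together with the uniqueness in (\textbf{A}2) to force every minimizer toward $F_0$. The earlier lemma establishing that $(\mathscr{F},\rho)$ is compact and the continuity hypothesis (\textbf{A}1) are exactly the ingredients this machinery requires.

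First I would pass to a normalized empirical cost $\mathcal{J}^d(F)$ whose minimizer set coincides with $\mathscr{F}^*(\mathbf{n}^d)$ (multiplying the objective in (\ref{eq:OrigInv}) by the positive normalizing factor does not alter its minimizers) and whose almost-sure limit is $J_0$. Substituting the data model $n_{ji}^d = \int_{x_{j-1}}^{x_j} b(t_i,x;F_0)\,dx + \mathcal{E}_{ji}$ into each residual and expanding the square, I would split $\mathcal{J}^d(F)$ into three pieces: a purely deterministic term built from $\bigl(b(\cdot,\cdot;F)-b(\cdot,\cdot;F_0)\bigr)^2$, a cross term linear in the noise $\mathcal{E}_{ji}$, and a pure-noise term quadratic in $\mathcal{E}_{ji}$.

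Then I would identify the limit of each piece. Since $T\times Q$ is compact and, by (\textbf{A}1), $b(t,x;\cdot)$ is continuous on the compact space $\mathscr{F}$ (hence uniformly continuous), the deterministic piece is a Riemann sum converging to $\int_T\int_Q \bigl(b(t,x;F)-b(t,x;F_0)\bigr)^2\,dx\,dt$ uniformly in $F$. Because the $\mathcal{E}_{ji}$ are i.i.d.\ with $E[\mathcal{E}_{ji}]=0$ and $Var[\mathcal{E}_{ji}]=\sigma^2$, the Strong Law of Large Numbers drives the cross term to $0$ and the pure-noise term to $\sigma^2$ almost surely; summing the three limits reproduces precisely $J_0(F)$. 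The delicate point, and the main obstacle, is that the two stochastic limits must hold uniformly in $F$ rather than merely pointwise. I would obtain this uniform strong law from the equicontinuity of the map $F\mapsto b(\cdot,\cdot;F)$ (a consequence of (\textbf{A}1) and the compactness of $\mathscr{F}$) together with a finite-cover argument: cover $\mathscr{F}$ by finitely many $\rho$-balls, apply the SLLN at their centers, and control the residual oscillation across each ball by equicontinuity and the uniform bound $0\le F\le 1$.

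Finally I would close with the usual subsequence argument. Let $F^d\in\mathscr{F}^*(\mathbf{n}^d)$ be any minimizer. By compactness of $(\mathscr{F},\rho)$, every subsequence admits a further convergent subsequence $F^{d_k}\to F^*$; the uniform convergence $\mathcal{J}^d\to J_0$ together with continuity of $J_0$ gives $J_0(F^*)=\lim_k \mathcal{J}^{d_k}(F^{d_k})\le \lim_k \mathcal{J}^{d_k}(F_0)=J_0(F_0)$, so $F^*$ minimizes $J_0$, and the uniqueness in (\textbf{A}2) forces $F^*=F_0$. As every subsequential limit equals $F_0$, the whole collection of minimizers collapses to $F_0$, which yields $d_H\bigl(\mathscr{F}^*(\mathbf{n}^d),\,F_0\bigr)\to0$ as $N_t\to\infty$ and $N_x\to\infty$.
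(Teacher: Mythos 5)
Your proposal is correct and follows essentially the same route as the paper, which itself only sketches the standard M-estimator argument and defers the details to \cite{BanksThompson2012}: show the (normalized) cost converges to $J_0$, then use compactness of $(\mathscr{F},\rho)$ and the uniqueness assumption (\textbf{A}2) to force every subsequential limit of minimizers to $F_0$. You in fact supply more care than the paper does, correctly noting both the need to normalize the sum (so that the $\sigma^2$ term in $J_0$ emerges) and the need for convergence uniform in $F$ rather than the merely pointwise convergence the paper's sketch states.
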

\begin{proof}
The specific details of this proof are nearly identical to a similar
theorem in \cite{BanksThompson2012} and so here we simply provide
an overview. Briefly, one first shows that $J(F;\,\mathbf{n}^{d})$
converges to $J_{0}(F)$ for each $F\in\mathscr{F}$ as $N_{t}\to\infty$
and $N_{x}\to\infty$. Then, using the fact that $J_{0}(F)$ is uniquely
minimized at $F_{0}$, one can show that for each sequence $\left\{ F^{d}\in\mathscr{F}^{*}(\mathbf{n}^{d})\right\} $
the Prohorov distance $\rho\left(F^{d},\,F_{0}\right)$ converges
to zero as $N_{t}\to\infty$ and $N_{x}\to\infty$, which yields the
result.
\end{proof}

\section{Approximate Inverse Problem}

Since the original problem involves minimizing over the infinite dimensional
space $\mathscr{F}$, pursuing this optimization is challenging without
some type of finite dimensional approximation. Thus we define some
approximation spaces over which the optimization problem becomes computationally
tractable. Similar to the partitioning presented in \cite{BanksBortz2005JIIP},
let $Q_{M}=\{q_{j}^{M}\}_{j=0}^{M}$ be partitions of $Q=[0,\overline{x}]$
for $M=1,2,\dots$ and

\begin{equation}
Q_{D}=\bigcup_{M=1}^{\infty}Q_{M}\label{eq:QD}
\end{equation}

\noindent where the sequences are chosen such that $Q_{D}$ is dense
in $Q$.

For positive integers $M,\,L$, let the approximation space be defined
as 
\begin{eqnarray*}
\mathscr{F}^{ML} & = & \left\{ F\in\mathscr{F}\;|\;F(x,y)=\sum_{m=1}^{M}p_{\ell m}\Delta_{q_{m}^{M}}(x)\,\mathbbm{1}_{(q_{\ell-1}^{L},\,q_{\ell}^{L}]}(y),\right.\\
 &  & \left.\qquad q_{m}^{M}\in Q_{M},\,q_{\ell}^{L}\in Q_{L},\,\sum_{m=1}^{\ell}p_{\ell m}=1,\,\ell=1,2,\dots,L\right\} 
\end{eqnarray*}

\noindent where $\Delta_{q}(x)$ is the Heaviside step function with
atom $x=q$ and the function $\mathbbm{1}_{A}$ is the indicator function
on the interval $A$. Next, define the space $\mathscr{F}_{D}$ as
\[
\mathscr{F}_{D}=\bigcup_{M,L=1}^{\infty}\mathscr{F}^{ML}.
\]
Consequently, since $Q$ is a complete, separable metric space, and
by Theorem 3.1 in \cite{BanksBihari2001} and properties of the sup
norm, $\mathscr{F}_{D}$ is dense in $\mathscr{F}$ in the $\rho$
metric. Therefore we can directly conclude that any function $F\in\mathscr{F}$
can be approximated by a sequence $\{F_{M_{j}L_{k}}\}$, $F_{M_{j}L_{k}}\in\mathscr{F}^{M_{j}L_{k}}$
such that as $M_{j},\,L_{k}\to\infty$, $\rho(F_{M_{j}L_{k}},\,F)\to0$.

Similar to the discussion concerning Theorem 4.1 in \cite{BanksBihari2001},
we now state the theorem regarding the continuous dependence of the
inverse problem upon the given data, as well as stability under approximation
of the inverse problem solution space $\mathscr{F}$.
\begin{thm}
\label{thm:Orig Cont dep on data} Let $Q=[0,\overline{x}]$, assume
that for fixed $t\in T,\;x\in Q$, $F\mapsto b(t,x,F)$ is continuous
on $\mathscr{F}$, and let $Q_{D}$ be a countable dense subset of
$Q$ as defined in (\ref{eq:QD}). Suppose that $\mathscr{F}^{*ML}(\mathbf{n}^{d})$
is the set of minimizers for $J(F;\mathbf{n}^{d})$ over $F\in\mathscr{F}^{ML}$
corresponding to the data $\mathbf{n}^{d}$. Then, $d_{H}(\mathscr{F}^{*ML}(\mathbf{n}^{d}),\,F_{0})\to0$
as $M,\,L,\,N_{t},\,N_{x}\to\infty$.\end{thm}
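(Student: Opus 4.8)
The plan is to recognize that the statement bundles two distinct effects — stability of the minimization under the finite-dimensional approximation $\mathscr{F}^{ML}\uparrow\mathscr{F}$ (the content of Theorem~4.1 in \cite{BanksBihari2001}) and the statistical consistency already obtained in Theorem~\ref{thm:Consistency of the estimator} — and to fold them into a single subsequential argument over the compact space $(\mathscr{F},\rho)$. Throughout I would work with an appropriately normalized cost $\bar{J}(F;\mathbf{n}^{d})$, which has the same minimizers as $J(F;\mathbf{n}^{d})$ but which, in the sense established in the course of proving Theorem~\ref{thm:Consistency of the estimator} (Riemann-sum structure of the bin integrals together with the strong law of large numbers applied to the i.i.d.\ noise $\mathcal{E}_{ji}$), satisfies $\bar{J}(F;\mathbf{n}^{d})\to J_0(F)$ as $N_t,N_x\to\infty$. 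By (A1) and Lemma~\ref{lem:bcontF} the limit $J_0$ is continuous on $\mathscr{F}$, and by (A2) it is uniquely minimized (up to $L^1$) at $F_0$.

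The core of the argument is an epi-convergence scheme. First I would fix arbitrary sequences $M_k,L_k,N_{t,k},N_{x,k}\to\infty$, set $d_k=N_{x,k}N_{t,k}$, and select $F^{k}\in\mathscr{F}^{*M_kL_k}(\mathbf{n}^{d_k})$. Since $(\mathscr{F},\rho)$ is compact, after passing to a subsequence I may assume $\rho(F^{k},\bar{F})\to0$ for some $\bar{F}\in\mathscr{F}$. To identify $\bar{F}$ I would set up the two standard inequalities. For the recovery inequality, the density of $\mathscr{F}_D=\bigcup_{M,L}\mathscr{F}^{ML}$ in $(\mathscr{F},\rho)$ supplies $\widehat{F}^{k}\in\mathscr{F}^{M_kL_k}$ with $\rho(\widehat{F}^{k},F_0)\to0$, and continuity of $b$ in $F$ together with the functional convergence gives $\bar{J}(\widehat{F}^{k};\mathbf{n}^{d_k})\to J_0(F_0)$. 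For the lower inequality, $\rho(F^{k},\bar{F})\to0$ and the same ingredients give $\bar{J}(F^{k};\mathbf{n}^{d_k})\to J_0(\bar{F})$. Since $F^{k}$ minimizes over $\mathscr{F}^{M_kL_k}$ and $\widehat{F}^{k}$ lies in that set, $\bar{J}(F^{k};\mathbf{n}^{d_k})\le\bar{J}(\widehat{F}^{k};\mathbf{n}^{d_k})$; passing to the limit yields $J_0(\bar{F})\le J_0(F_0)$, whence $\bar{F}=F_0$ by the uniqueness in (A2). Because every subsequence of $\{F^{k}\}$ then admits a further subsequence converging to the single point $F_0$, sequential compactness forces $\rho(F^{k},F_0)\to0$ for the whole sequence; as the selection $F^{k}$ was arbitrary, this is precisely $d_H(\mathscr{F}^{*M_kL_k}(\mathbf{n}^{d_k}),F_0)\to0$.

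The step I expect to be the main obstacle is justifying the two limit passages $\bar{J}(F^{k};\mathbf{n}^{d_k})\to J_0(\bar{F})$ and $\bar{J}(\widehat{F}^{k};\mathbf{n}^{d_k})\to J_0(F_0)$, in which both the functional (through $d_k$) and its argument (through $F^{k}$ or $\widehat{F}^{k}$) move simultaneously. Pointwise convergence $\bar{J}(F;\cdot)\to J_0(F)$ is insufficient here; what is needed is that $\bar{J}(\cdot;\mathbf{n}^{d})\to J_0(\cdot)$ \emph{uniformly} on the compact set $\mathscr{F}$ (equivalently, asymptotic equicontinuity of the family $\{\bar{J}(\cdot;\mathbf{n}^{d})\}$), so that $|\bar{J}(G_k;\mathbf{n}^{d_k})-J_0(G_k)|\to0$ for any sequence $G_k$ while $J_0(G_k)\to J_0(G)$ whenever $\rho(G_k,G)\to0$. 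I would derive this uniformity from the fact that the Gronwall bound in Lemma~\ref{lem:bcontF} furnished by Condition~\ref{con:gLipschitz} is uniform in $F$, which renders $F\mapsto b(t,\cdot;F)$ equicontinuous, combined with the almost-sure control of the noise contributions via the strong law of large numbers, exactly as in \cite[Theorem~4.3]{BanksThompson2012}.

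Once the uniform convergence is in hand the remaining bookkeeping is routine: a diagonal extraction shows the conclusion is independent of the order in which $M,L,N_t,N_x\to\infty$, and interpreting $d_H$ against the singleton $\{F_0\}$ via the triangle inequality recovers the decomposition into the approximation-stability term (handled by the recovery sequence, as in \cite[Theorem~4.1]{BanksBihari2001}) and the consistency term (Theorem~\ref{thm:Consistency of the estimator}).
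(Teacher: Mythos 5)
Your proof is correct, and it reorganizes the argument in a way that genuinely differs from the paper's. The paper splits $d_H(\mathscr{F}^{*ML}(\mathbf{n}^{d}),\,F_{0})\to0$ by the triangle inequality through the intermediate, data-dependent set $\mathscr{F}^{*}(\mathbf{n}^{d})$ of exact minimizers: the term $d_H(\mathscr{F}^{*ML}(\mathbf{n}^{d}),\,\mathscr{F}^{*}(\mathbf{n}^{d}))$ is handled by the subsequence-plus-density argument of Theorem 4.1 of \cite{BanksBihari2001}, and the remaining term by Theorem~\ref{thm:Consistency of the estimator}. You instead run a single epi-convergence pass: extract a convergent subsequence of approximate minimizers by compactness of $(\mathscr{F},\rho)$, build a recovery sequence for $F_0$ from the density of $\mathscr{F}_D$, compare costs by minimality, and identify the limit as $F_0$ directly via the uniqueness in (\textbf{A}2). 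The ingredients are the same (compactness, density, continuity of $F\mapsto b(t,\cdot;F)$, convergence of the cost to $J_0$, uniqueness of its minimizer), but your version avoids the slightly awkward feature of the paper's first term, namely that the target set $\mathscr{F}^{*}(\mathbf{n}^{d})$ is itself moving as $N_t,N_x\to\infty$. You also make explicit the one delicate step that both arguments need and that the paper defers entirely to the cited references: in the limit passages the functional and its argument move simultaneously, so pointwise convergence $\bar{J}(F;\mathbf{n}^{d})\to J_0(F)$ must be upgraded to uniform (equivalently, continuous) convergence on the compact set $\mathscr{F}$; deriving this from the $F$-uniform Gronwall bound of Lemma~\ref{lem:bcontF} together with the law of large numbers for the noise is the right mechanism and matches \cite{BanksThompson2012}. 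One caveat worth recording in either version: since the cost involves the random errors $\mathcal{E}_{ji}$, the conclusion holds almost surely, a qualification the paper also suppresses.
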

\begin{proof}
Suppose that $\mathscr{F}^{*}(\mathbf{n}^{d})$ is the set of minimizers
for $J(F;\mathbf{n}^{d})$ over $F\in\mathscr{F}$ corresponding to
the data $\mathbf{n}^{d}$. Using continuous dependence of solutions
on $F$, compactness of $(\mathscr{F},\,\rho)$, and the density of
$\mathscr{F}_{D}$ in $\mathscr{F}$, the arguments follow precisely
those for Theorem 4.1 in \cite{BanksBihari2001}. In particular, one
would argue in the present context that any sequence $F_{d}^{*ML}\in\mathscr{F}^{*ML}(\mathbf{n}^{d})$
has a subsequence $F_{d_{k}}^{*M_{j}L_{i}}$ that converges to a $\tilde{F}\in\mathscr{F}^{*}(\mathbf{n}^{d})$.
Therefore, we can claim that 
\begin{equation}
d_{H}\left(\mathscr{F}^{*ML}(\mathbf{n}^{d}),\,\mathscr{F}^{*}(\mathbf{n}^{d})\right)\to0\label{eq: approximate space converges}
\end{equation}
as $M,\,L,\,N_{t},\,N_{x}\to\infty$. Conversely, simple triangle
inequality yields that 
\[
d_{H}(\mathscr{F}^{*ML}(\mathbf{n}^{d}),\,F_{0})\le d_{H}\left(\mathscr{F}^{*ML}(\mathbf{n}^{d}),\,\mathscr{F}^{*}(\mathbf{n}^{d})\right)+d_{H}\left(\mathscr{F}^{*}(\mathbf{n}^{d}),\,F_{0}\right)\,.
\]
This is in turn, from (\ref{eq: approximate space converges}) and
Theorem \ref{thm:Consistency of the estimator}, implies that $d_{H}(\mathscr{F}^{*ML}(\mathbf{n}^{d}),\,F_{0})$
converges to zero as $M,\,L,\,N_{t},\,N_{x}\to\infty$.
\end{proof}
Since we do not have direct access to an analytical solution to (\ref{eq:AbstractEvEqn}),
our efforts are focused on the solving the approximate inverse problem
\begin{equation}
\min_{F\in\mathcal{F}}J^{N}(F,\mathbf{n}^{d})=\min_{f\in\mathcal{F}}\sum_{i=1}^{N_{t}}\sum_{j=1}^{N_{x}}\left(\int_{x_{j-1}}^{x_{j}}b^{N}(t_{i},x_{j};F)dx-n_{ji}^{d}\right)^{2}\,.\label{eq:ApproxInv}
\end{equation}
Here, $N_{t}$ is the number of data observations, $N_{x}$ is the
number of data bins for floc volume, and $b^{N}$ is the semi-discrete
approximation to $b$. In Section \ref{sec:Example-Illustration},
we will define a uniformly (in time) convergent discretization scheme
and its corresponding approximation space $H^{N}\subset H$. The discretized
version of (\ref{eq:ApproxInv}) is represented by 
\begin{eqnarray}
\qquad b_{t}^{N} & = & g^{N}(b^{N},F)\label{eq:AbstractEvEqn-1}\\
b^{N}(0,x) & = & b_{0}^{N}(x)\label{eq:AbstractEvEqnIC-1}
\end{eqnarray}
where $g^{N}:H^{N}\times\mathscr{F}\to H^{N}$ denotes the discretized
version of $g$. We will need that $g^{N}$ exhibits a type of local
Lipschitz continuity and accordingly define the following condition.

\begin{condition}\label{cond:gNLipschitz} Suppose that the discretization
given in (\ref{eq:AbstractEvEqn-1})-(\ref{eq:AbstractEvEqnIC-1})
is a convergent scheme. Let $(b^{N},F),\,(\tilde{b}^{N},\tilde{F})\in H^{N}\times\mathscr{F}$.
For fixed $t$, the function $g^{N}:\,H^{N}\times\mathscr{F}\to H^{N}$
must satisfy 
\[
\left\Vert g^{N}(b^{N},F)-g^{N}(\tilde{b}^{N},\tilde{F})\right\Vert \leq C_{N}\left\Vert b^{N}-\tilde{b}^{N}\right\Vert +\mathscr{T}^{N}(F,\tilde{F}),
\]
where $C_{N}>0$, and $\mathscr{T}^{N}(F,\tilde{F})$ is some function
such that $|\mathscr{T}^{N}(F,\tilde{F})|<\infty$ and $\mathscr{T}^{N}(F,\tilde{F})\to0$
as $\rho(F,\tilde{F})\to0$.\end{condition}

General \emph{method stability} \cite{BanksKunisch1989} requires
$b^{N}(t,x;F_{i})\to b(t,x;F)$ as $F_{i}\to F$ in the $\rho$ metric
and as $N\to\infty$; we will now prove this.
\begin{lem}
\label{lem:bNFi} Let $t\in T$, $F\in\mathscr{F},$and $\{F_{i}\}\in\mathscr{F}$
such that $\lim_{i\to\infty}\rho(F_{i},F)=0$. For fixed $N$, if
$b^{N}(t,x;F_{i})$ is the solution to (\ref{eq:9N})-(\ref{eq:9Nend})
and Condition \ref{cond:gNLipschitz} holds, then $b^{N}$ is pointwise
continuous at $F\in\mathscr{F}$.\end{lem}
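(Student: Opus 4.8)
The plan is to mirror exactly the argument used for Lemma~\ref{lem:bcontF}, but now applied to the discretized system~(\ref{eq:AbstractEvEqn-1})--(\ref{eq:AbstractEvEqnIC-1}) with $N$ held fixed. Pointwise continuity of $b^{N}$ at $F$ means showing that $\left\Vert b^{N}(t,\cdot;F_{i})-b^{N}(t,\cdot;F)\right\Vert \to 0$ whenever $\rho(F_{i},F)\to 0$, for fixed $t$ and fixed $N$. First I would rewrite the semi-discrete evolution equation in integral (mild) form, treating $b^{N}$ as a function of $F$:
\[
b^{N}(t,x;F)=b_{0}^{N}(x)+\int_{0}^{t}g^{N}\bigl(b^{N}(s,x;F),F\bigr)\,ds\,.
\]
Subtracting the corresponding expressions for $F_{i}$ and $F$ and noting that the initial data $b_{0}^{N}$ does not depend on $F$, the triangle inequality for the $H^{N}$ norm gives
\[
\left\Vert b^{N}(t,\cdot;F_{i})-b^{N}(t,\cdot;F)\right\Vert \leq \int_{0}^{t}\left\Vert g^{N}\bigl(b^{N}(s,\cdot;F_{i}),F_{i}\bigr)-g^{N}\bigl(b^{N}(s,\cdot;F),F\bigr)\right\Vert ds\,.
\]

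Next I would invoke Condition~\ref{cond:gNLipschitz}, which supplies exactly the local Lipschitz estimate needed to bound the integrand by $C_{N}\left\Vert b^{N}(s,\cdot;F_{i})-b^{N}(s,\cdot;F)\right\Vert + \mathscr{T}^{N}(F_{i},F)$. Defining $\mathcal{T}^{N}(F_{i},F)=\int_{0}^{t_{f}}\mathscr{T}^{N}(F_{i},F)\,ds$ (which is independent of $t$ and finite since $\mathscr{T}^{N}$ is bounded), this yields the integral inequality
\[
\left\Vert b^{N}(t,\cdot;F_{i})-b^{N}(t,\cdot;F)\right\Vert \leq C_{N}\int_{0}^{t}\left\Vert b^{N}(s,\cdot;F_{i})-b^{N}(s,\cdot;F)\right\Vert ds + \mathcal{T}^{N}(F_{i},F)\,.
\]
An application of Gronwall's inequality then gives
\[
\left\Vert b^{N}(t,\cdot;F_{i})-b^{N}(t,\cdot;F)\right\Vert \leq \mathcal{T}^{N}(F_{i},F)\,e^{C_{N}t_{f}}\,,
\]
and since $\mathscr{T}^{N}(F_{i},F)\to 0$ as $\rho(F_{i},F)\to 0$ by Condition~\ref{cond:gNLipschitz}, the right-hand side vanishes in the limit, establishing pointwise continuity of $b^{N}$ at $F$.

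I expect the argument to be essentially routine, since it is a direct transcription of Lemma~\ref{lem:bcontF} with $g$, $C$, $\mathscr{T}$ replaced by their discretized counterparts $g^{N}$, $C_{N}$, $\mathscr{T}^{N}$ and with the constant $C_{N}$ now depending on $N$. The only point requiring minor care is ensuring that $\mathcal{T}^{N}(F_{i},F)\to 0$ uniformly in the relevant sense and that $e^{C_{N}t_{f}}$ is a finite constant for each fixed $N$ (it is, since $N$ is held fixed throughout and $t_{f}<\infty$); consequently the dependence of $C_{N}$ on $N$ poses no obstacle here because we are not yet letting $N\to\infty$. The genuinely interesting content—namely combining this pointwise-in-$F$ continuity with the convergence of the scheme ($b^{N}\to b$ as $N\to\infty$) to obtain full method stability $b^{N}(t,x;F_{i})\to b(t,x;F)$—is deferred to a subsequent result, so the present lemma is only the first, and simpler, of the two ingredients.
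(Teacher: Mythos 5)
Your proposal is correct and follows exactly the route the paper takes: the paper's own proof simply states that the argument is identical to that of Lemma~\ref{lem:bcontF} (integral form, Condition~\ref{cond:gNLipschitz}, Gronwall), and you have written out that transcription in full, including the correct observation that the $N$-dependence of $C_{N}$ is harmless because $N$ is held fixed. No issues.
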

\begin{proof}
The proof of this lemma is identical to that for Lemma \ref{lem:bcontF}.
We first recast (\ref{eq:AbstractEvEqn-1}) as an integral equation
and then apply Condition \ref{cond:gNLipschitz} and Gronwall's inequality
to obtain the desired result.\end{proof}
\begin{cor}
\label{cor:bNFNtobF} Under Condition \ref{cond:gNLipschitz} and
Lemma~\ref{lem:bNFi}, we can conclude that $\left\Vert b^{N}(t,\cdot;F_{N})-b(t,\cdot;F)\right\Vert \to0$
as $N\to\infty$ uniformly in $t$ on $I$.\end{cor}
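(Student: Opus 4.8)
The plan is to insert the intermediate quantity $b^{N}(t,\cdot;F)$ and split the error by the triangle inequality,
\[
\left\Vert b^{N}(t,\cdot;F_{N})-b(t,\cdot;F)\right\Vert \le\left\Vert b^{N}(t,\cdot;F_{N})-b^{N}(t,\cdot;F)\right\Vert +\left\Vert b^{N}(t,\cdot;F)-b(t,\cdot;F)\right\Vert .
\]
The second term measures the discretization error at a fixed measure $F$, while the first measures the sensitivity of the $N$th scheme to the perturbation $F_{N}\to F$. I would show that each term tends to zero and that both bounds are uniform in $t$ on $T=[0,t_{f}]$.

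For the second term, the hypothesis in Condition~\ref{cond:gNLipschitz} that the scheme (\ref{eq:AbstractEvEqn-1})--(\ref{eq:AbstractEvEqnIC-1}) is convergent gives precisely $\left\Vert b^{N}(t,\cdot;F)-b(t,\cdot;F)\right\Vert \to0$ as $N\to\infty$; since the scheme constructed in Section~\ref{sec:Example-Illustration} is uniformly (in time) convergent, this term vanishes uniformly in $t$.

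For the first term, I would reuse the estimate already produced inside the proof of Lemma~\ref{lem:bNFi}. Recasting (\ref{eq:AbstractEvEqn-1}) as an integral equation, applying Condition~\ref{cond:gNLipschitz}, and invoking Gronwall's inequality yields
\[
\left\Vert b^{N}(t,\cdot;F_{N})-b^{N}(t,\cdot;F)\right\Vert \le\mathcal{T}^{N}(F_{N},F)\,e^{C_{N}t_{f}},
\]
where $\mathcal{T}^{N}(F_{N},F)=\int_{0}^{t_{f}}\mathscr{T}^{N}(F_{N},F)\,ds$. Because $\rho(F_{N},F)\to0$ by hypothesis, $\mathscr{T}^{N}(F_{N},F)\to0$; and since this bound is independent of $t$, the resulting convergence is automatically uniform on $T$.

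The main obstacle is that the first term is a genuine diagonal limit: both the discretization level $N$ and the measure index advance simultaneously, whereas Lemma~\ref{lem:bNFi} only supplies continuity in $F$ for each \emph{fixed} $N$. Closing the argument therefore requires uniform-in-$N$ control of the constants appearing in the Gronwall bound, namely a uniform Lipschitz bound $\sup_{N}C_{N}<\infty$ together with the uniform decay $\sup_{N}\mathscr{T}^{N}(F_{N},F)\to0$ as $\rho(F_{N},F)\to0$. These are exactly the regularity features one expects to accompany the convergent-scheme hypothesis, and I would extract them from it; with these in hand, the product $\mathcal{T}^{N}(F_{N},F)\,e^{C_{N}t_{f}}$ is dominated by a vanishing sequence, and the two-term bound drives the whole expression to zero uniformly on $T$.
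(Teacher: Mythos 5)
Your proposal is correct and follows essentially the same route as the paper's own proof: insert $b^{N}(t,\cdot;F)$, split by the triangle inequality, handle the second term by the assumed uniform convergence of the scheme and the first by the Gronwall estimate underlying Lemma~\ref{lem:bNFi}. The one point worth noting is that your concern about the diagonal limit is well founded -- the paper's one-line proof simply cites Lemma~\ref{lem:bNFi}, which is stated for fixed $N$, so the uniform-in-$N$ bounds $\sup_{N}C_{N}<\infty$ and $\sup_{N}\mathscr{T}^{N}(F_{N},F)\to0$ that you make explicit are indeed needed (and are silently assumed) to close the argument.
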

\begin{proof}
A standard application of the triangle inequality yields 
\begin{eqnarray*}
\left\Vert b^{N}(t,\cdot;F_{N})-b(t,\cdot;F)\right\Vert  & \leq & \left\Vert b^{N}(t,\cdot;F_{N})-b^{N}(t,\cdot;F)\right\Vert \\
 &  & \quad+\left\Vert b^{N}(t,\cdot;F)-b(t,\cdot;F)\right\Vert \,.
\end{eqnarray*}
The first term converges by Lemma~\ref{lem:bNFi}, while the second
term converges because the proposed numerical scheme is assumed to
converge uniformly.
\end{proof}
With this corollary, we now consider the existence of a solution to
the approximate inverse problem in (\ref{eq:ApproxInv}), as well
as the solution's dependence on the given data $\mathbf{n}^{d}$.
\begin{thm}
\label{thm:Exist/conv of orig =000026 approx mins}There exists solutions
to both the original and approximate inverse problems in (\ref{eq:OrigInv})
and (\ref{eq:ApproxInv}), respectively. Moreover, for fixed data
$\mathbf{n}^{d}$, there exist a subsequence of the estimators $\left\{ F_{N}\right\} _{N=1}^{\infty}$
of (\ref{eq:ApproxInv}) that converge to a solution of the original
inverse problem (\ref{eq:OrigInv}).\end{thm}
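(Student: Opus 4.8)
The plan is to treat existence and convergence in turn, leaning throughout on the compactness of $(\mathscr{F},\rho)$ and on the general method stability packaged in Corollary~\ref{cor:bNFNtobF}. Existence of a minimizer for the original problem (\ref{eq:OrigInv}) is already in hand from Theorem~\ref{thm:OrigExist}. For the approximate problem (\ref{eq:ApproxInv}) I would mimic that argument verbatim: Lemma~\ref{lem:bNFi} guarantees that for each fixed $N$ the map $F\mapsto b^{N}(t_{i},\cdot;F)$ is continuous on $\mathscr{F}$, so $F\mapsto J^{N}(F;\mathbf{n}^{d})$ is a continuous real-valued functional (it is a finite sum of squares of continuous bin integrals); since $(\mathscr{F},\rho)$ is compact, $J^{N}(\cdot;\mathbf{n}^{d})$ attains its infimum, producing an estimator $F_{N}\in\mathscr{F}$ for every $N$.

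For the convergence assertion I would first invoke compactness of $(\mathscr{F},\rho)$ to extract from $\{F_{N}\}$ a convergent subsequence $F_{N_{k}}\to\tilde{F}\in\mathscr{F}$, and then identify $\tilde{F}$ as a minimizer of the original functional. Fix any true minimizer $F^{*}\in\mathscr{F}^{*}(\mathbf{n}^{d})$ of (\ref{eq:OrigInv}), which exists by Theorem~\ref{thm:OrigExist}. Two limit passages drive the argument. First, since $F_{N_{k}}\to\tilde{F}$, Corollary~\ref{cor:bNFNtobF} yields $\|b^{N_{k}}(t_{i},\cdot;F_{N_{k}})-b(t_{i},\cdot;\tilde{F})\|\to0$, so each partial-moment integral in (\ref{eq:ApproxInv}) converges to the corresponding integral in (\ref{eq:OrigInv}) and hence $J^{N_{k}}(F_{N_{k}};\mathbf{n}^{d})\to J(\tilde{F};\mathbf{n}^{d})$. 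Second, applying Corollary~\ref{cor:bNFNtobF} to the constant sequence $F_{N}\equiv F^{*}$ (which trivially converges to $F^{*}$) gives $J^{N_{k}}(F^{*};\mathbf{n}^{d})\to J(F^{*};\mathbf{n}^{d})$.

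Combining these two limits with the defining optimality $J^{N_{k}}(F_{N_{k}};\mathbf{n}^{d})\le J^{N_{k}}(F^{*};\mathbf{n}^{d})$ and letting $k\to\infty$ produces $J(\tilde{F};\mathbf{n}^{d})\le J(F^{*};\mathbf{n}^{d})=\min_{F\in\mathscr{F}}J(F;\mathbf{n}^{d})$; the reverse inequality holds automatically, so $\tilde{F}\in\mathscr{F}^{*}(\mathbf{n}^{d})$, which is exactly the claimed convergence of a subsequence of the estimators to a solution of the original inverse problem.

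I expect the main obstacle to be the first limit passage, in which the discretization index $N_{k}$ and the measure argument $F_{N_{k}}$ vary simultaneously; a naive triangle-inequality split leaves a cross term that is controlled only if the convergence $b^{N}\to b$ is suitably uniform as $F$ moves with $N$. This is precisely the content of the general method stability in Corollary~\ref{cor:bNFNtobF}, so once that corollary is invoked the difficulty dissolves. The remaining care is purely bookkeeping: the finitely many bin integrals $\int_{x_{j-1}}^{x_{j}}(\cdot)\,dx$ over fixed bins of finite length inherit the $L^{1}$ convergence immediately, and the continuity of the squaring and finite summation then transfers convergence from the integrands to $J^{N_{k}}$.
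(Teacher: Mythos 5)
Your proposal is correct and follows essentially the same route as the paper's proof: continuity of $F\mapsto b^{N}(t,\cdot;F)$ (Lemma~\ref{lem:bNFi}) plus compactness of $(\mathscr{F},\rho)$ for existence, then extraction of a convergent subsequence of minimizers and passage to the limit in the optimality inequality via Corollary~\ref{cor:bNFNtobF}. The only cosmetic difference is that you test the inequality against a single fixed minimizer $F^{*}$ of $J$ rather than against all $F\in\mathscr{F}$ as the paper does, which is an equivalent and equally valid way to conclude.
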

\begin{proof}
As noted above, $(\mathscr{F},\rho)$ is compact. By Lemmas~\ref{lem:bcontF}
and \ref{lem:bNFi}, we have that both $F\mapsto b(t,x;F)$ and $F\mapsto b^{N}(t,x;F)$,
for fixed $t\in T$, are continuous with respect to $F$. We therefore
know there exist minimizers in $\mathscr{F}$ to the original and
approximate cost functionals $J$ and $J^{N}$ respectively.

Let $\{F_{N}^{*}\}\in\mathscr{F}$ be any sequence of solutions to
(\ref{eq:ApproxInv}) and $\{F_{N_{k}}^{*}\}$ a convergent (in $\rho$)
subsequence of minimizers. Recall that minimizers are not necessarily
unique, but one can always select a convergent subsequence of minimizers
in $\mathscr{F}$. Denote the limit of this subsequence with $F^{*}$.
By the minimizing properties of $F_{N_{k}}^{*}\in\mathscr{F}$, we
then know that

\begin{equation}
J^{N_{k}}(F_{N_{k}}^{*},\mathbf{n}^{d})\leq J^{N_{k}}(F,\mathbf{n}^{d})\qquad\text{for all }F\in\mathscr{F}.\label{eq:JNkFNk leq JNkF}
\end{equation}

By Corollary~\ref{cor:bNFNtobF}, we have the convergence of $b^{N}(t,x;F_{N})\to b(t,x;F)$
and thus $J^{N}(F_{N})\to J(F)$ as $N\to\infty$ when $\rho(F_{N},F)\to0$.
Thus in the limit as $N_{k}\to\infty$, the inequality in (\ref{eq:JNkFNk leq JNkF})
becomes 
\[
J(F^{*},\mathbf{n}^{d})\leq J(F,\mathbf{n}^{d})\qquad\text{for all }F\in\mathscr{F}
\]
 with $F^{*}$ providing a (not necessarily unique) minimizer of (\ref{eq:OrigInv}).\end{proof}
\begin{thm}
Assume that for fixed $t\in T$, $F\mapsto b(t,x;F)$ is continuous
on $\mathscr{F}$ in $\rho$, $b^{N}$ is the approximate solution
to the forward problem given (\ref{eq:9N})-(\ref{eq:9Nend}), $J^{N}$
is the approximation given in (\ref{eq:ApproxInv}), and $Q_{D}$
a countable dense subset of $Q$ as defined in (\ref{eq:QD}). Moreover,
suppose that $\mathscr{F}_{N}^{*ML}(\mathbf{n}^{d})$ is the set of
minimizers for $J^{N}(F;\mathbf{n}^{d})$ over $F\in\mathscr{F}^{ML}$
corresponding to the data $\mathbf{n}^{d}$. Similarly, suppose that
$\mathscr{F}^{*}(\mathbf{n}^{d})$ is the set of minimizers for $J(F;\mathbf{n}^{d})$
over $F\in\mathscr{F}$ corresponding to the data $\mathbf{n}^{d}$.
Then, $d_{H}(\mathscr{F}_{N}^{*ML}(\mathbf{n}^{d}),\,F_{0})\to0$
as $N,\,M,\,L,\,N_{t},\,N_{x}\to\infty$.\end{thm}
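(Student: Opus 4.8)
The plan is to reduce the statement to the two convergence results already established by inserting an intermediate minimizer set and invoking the triangle inequality for $d_H$ exactly as in the proof of Theorem~\ref{thm:Orig Cont dep on data}. Let $\mathscr{F}^{*ML}(\mathbf{n}^d)$ denote the minimizers of the \emph{exact} cost $J(\cdot\,;\mathbf{n}^d)$ over the approximation space $\mathscr{F}^{ML}$. Then
\[
d_H(\mathscr{F}_N^{*ML}(\mathbf{n}^d),\,F_0)\le d_H\big(\mathscr{F}_N^{*ML}(\mathbf{n}^d),\,\mathscr{F}^{*ML}(\mathbf{n}^d)\big)+d_H\big(\mathscr{F}^{*ML}(\mathbf{n}^d),\,F_0\big).
\]
The second term vanishes as $M,L,N_t,N_x\to\infty$ directly by Theorem~\ref{thm:Orig Cont dep on data}, so the whole burden is to show that the first term vanishes, i.e. that replacing the exact forward solver $b$ by its convergent discretization $b^N$ does not perturb the minimizer set of the discrete-space problem in the limit $N\to\infty$.

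For the first term I would first record existence: for fixed $M,L$ the set $\mathscr{F}^{ML}$ is compact (it is parametrized by the weights $p_{\ell m}$ ranging over a product of simplices with fixed atoms $q_m^M$), and by Lemma~\ref{lem:bNFi} the map $F\mapsto J^N(F;\mathbf{n}^d)$ is continuous on it, so $\mathscr{F}_N^{*ML}(\mathbf{n}^d)$ is nonempty. Then I would mirror the subsequence argument of Theorem~\ref{thm:Exist/conv of orig =000026 approx mins}, but carried out over $\mathscr{F}^{ML}$ rather than over all of $\mathscr{F}$: take any sequence of minimizers $F_N^{*ML}\in\mathscr{F}_N^{*ML}(\mathbf{n}^d)$, extract a $\rho$-convergent subsequence with limit $F^*$ by compactness, and use Corollary~\ref{cor:bNFNtobF} to pass both $J^{N}(F_N^{*ML};\mathbf{n}^d)\to J(F^*;\mathbf{n}^d)$ and $J^{N}(F;\mathbf{n}^d)\to J(F;\mathbf{n}^d)$ for every competitor $F\in\mathscr{F}^{ML}$. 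The minimizing inequality $J^{N}(F_N^{*ML};\mathbf{n}^d)\le J^{N}(F;\mathbf{n}^d)$ then passes to the limit to give $J(F^*;\mathbf{n}^d)\le J(F;\mathbf{n}^d)$ for all $F\in\mathscr{F}^{ML}$, so $F^*\in\mathscr{F}^{*ML}(\mathbf{n}^d)$, which is precisely the vanishing of the first term.

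The main obstacle is that the theorem asks for all five indices $N,M,L,N_t,N_x$ to tend to infinity \emph{simultaneously}, whereas the subsequence argument above is naturally run with $M,L,N_t,N_x$ fixed and only $N\to\infty$. To close this cleanly I would avoid decoupling the limits and instead establish the single pointwise convergence $J^N(F;\mathbf{n}^d)\to J_0(F)$ as $N,N_t,N_x\to\infty$, combining the limit $J(\cdot\,;\mathbf{n}^d)\to J_0$ used in the proof of Theorem~\ref{thm:Consistency of the estimator} with the solver limit $J^N\to J$ from Corollary~\ref{cor:bNFNtobF}; then, invoking assumption (\textbf{A}2) that $J_0$ is uniquely minimized at $F_0$ together with the density of $\bigcup_{M,L}\mathscr{F}^{ML}=\mathscr{F}_D$ in $\mathscr{F}$, a standard argmin-convergence argument on the compact space $(\mathscr{F},\rho)$ forces every sequence of discrete-space, discrete-solver minimizers to accumulate only at $F_0$. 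Either route works; the delicate point in both is verifying that the convergence furnished by Corollary~\ref{cor:bNFNtobF} is uniform enough over the \emph{moving} approximation space $\mathscr{F}^{ML}$ that the passage to the limit in the minimizing inequality remains legitimate while the space itself is being refined.
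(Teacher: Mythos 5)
Your proposal takes essentially the same route as the paper: the identical triangle-inequality decomposition through the intermediate set $\mathscr{F}^{*ML}(\mathbf{n}^{d})$, with the second term controlled by Theorem~\ref{thm:Orig Cont dep on data} and the first by rerunning the subsequence argument of Theorem~\ref{thm:Exist/conv of orig =000026 approx mins} over $\mathscr{F}^{ML}$. The paper's own proof is only a two-line citation of those two theorems, so your more detailed version --- in particular your flagging of the subtlety that all five indices tend to infinity simultaneously, which the paper leaves implicit --- is, if anything, more careful than the original.
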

\begin{proof}
Observe that a simple triangle inequality yields 
\[
d_{H}\left(\mathscr{F}_{N}^{*ML}(\mathbf{n}^{d}),\,F_{0}\right)\le d_{H}\left(\mathscr{F}_{N}^{*ML}(\mathbf{n}^{d}),\,\mathscr{F}^{*ML}(\mathbf{n}^{d})\right)+d_{H}\left(\mathscr{F}^{*ML}(\mathbf{n}^{d}),\,F_{0}\right)\,.
\]
Therefore, combining the arguments of Theorem~\ref{thm:Orig Cont dep on data}
and Theorem~\ref{thm:Exist/conv of orig =000026 approx mins}, we
readily obtain that $d_{H}\left(\mathscr{F}_{N}^{*ML}(\mathbf{n}^{d}),\,F_{0}\right)$
converges to zero as $N,\,M,\,L,\,N_{t},\,N_{x}\to\infty$.
\end{proof}
With the results of these two theorems, we can claim that both there
exists a solution to the inverse problem and it is continuously dependent
on the given data. We have established method stability under approximation
of the state space and parameter space of our inverse problem. Therefore
we can conclude general well-posedness of the inverse problem.

\section{\label{sec:Example-Illustration}Example Illustration}

The particular model we study here is the size-structured flocculation
dynamics of the microorganisms in suspension and is given by the following
integro-differential equation
\begin{eqnarray}
\qquad b_{t} & = & \mathcal{A}[b]+\mathcal{B}[b]+\mathcal{R}[b],\label{eq:9}\\
b(t,\,0) & = & 0,\\
b(0,\,x) & = & b_{0}(x),\label{eq:9end}
\end{eqnarray}

\noindent where $b(t,\,x)\,dx$ is the number of aggregates with volumes
in $[x,\,x+dx]$ at time $t$, and $\mathcal{A}$, $\mathcal{B}$
and $\mathcal{R}$ are the aggregation, breakage (fragmentation) and
removal operators, respectively. We consider $x\in Q=[0,\,\overline{x}]$,
where $\overline{x}$ is the maximum floc volume and $t\in T=[0,\,t_{f}]$,
$t_{f}<\infty$. The aggregation, fragmentation and removal functions
are defined by:

\begin{eqnarray}
\mathcal{A}[p](t,\,x) & := & \frac{1}{2}\int_{0}^{x}k_{a}(x-y,\,y)p(t,\,x-y)p(t,\,y)\,dy\nonumber \\
 & \quad & -p(t,\,x)\int_{0}^{\overline{x}}k_{a}(x,\,y)p(t,\,y)\,dy\,,\label{eq:Aggregation}
\end{eqnarray}
 
\begin{eqnarray}
\mathcal{B}[p](t,\,x) & := & \int_{x}^{\overline{x}}\Gamma(x;\,y)k_{f}(y)p(t,\,y)\,dy-\frac{1}{2}k_{f}(x)p(t,\,x)\label{eq:Breakage}
\end{eqnarray}
and 
\begin{equation}
\mathcal{R}[p](t,\,x):=-\mu(x)p(t,\,x)\,.\label{eq:Removal}
\end{equation}

\noindent where $k_{a}(x,\,y)$ is the aggregation kernel, describing
the rate at which flocs of volume $x$ and $y$ combine to form a
floc of volume $x+y$. The aggregation kernel is symmetric function
and $k_{a}(x,\,y)=0$ for $x+y>\overline{x}$. The fragmentation kernel
$k_{f}(x)$ describes the rate at which a floc of volume $x$ fragments.
The function $\Gamma(x,\,y)$ is the post-fragmentation probability
density, for the conditional probability of producing a daughter floc
of size $x$ from a mother floc of size $y$. This probability density
is used to characterize the stochastic nature of floc fragmentation
(e.g., see the discussions in \cite{Bortz2008,HanEtal2003AICHEJ,BablerEtal2008jfm,ByrneEtal2011}).

In \cite{ByrneEtal2011}, we proposed a model for bacterial floc breakage
based upon hydrodynamic arguments and predicted a post fragmentation
density $\Gamma$. The eventual goal (and the topic for a future paper)
is to unify the theoretical results (in this work and in \cite{Bortz2008,ByrneEtal2011})
with experimental evidence to validate (or refute) our proposed fragmentation
model. We now consider the application of this framework to the system
in (\ref{eq:9})-(\ref{eq:9end}). For fixed $t\in I,\,$$b(t,\cdot)\in H$,
$F\in\mathscr{F}$, consider the right side of (\ref{eq:9}), represented
by (\ref{eq:AbstractEvEqn}), 
\[
g(b,\,F)=\mathcal{A}[b]+\mathcal{B}[b;\,F]+\mathcal{R}[b]\,.
\]
 To show that $g$ satisfies the locally Lipschitz property of Condition~\ref{con:gLipschitz},
we need the following two lemmas.
\begin{lem}
Suppose that $k_{f},\,\mu\in L^{\infty}(Q)$ and $k_{a},\,\Gamma\in L^{\infty}\left(Q\times Q\right)$.
The evolution equation (\ref{eq:9})-(\ref{eq:9end}) is well-posed
on $H=L^{1}\left(Q,\,\mathbb{R}^{+}\right)$ and for any compact set
$T=[0,\,t_{f}]$ and $b_{0}\ge0$, the classical solution of (\ref{eq:9})-(\ref{eq:9end})
satisfies
\[
C_{0}=\sup_{t\in T,\,x\in Q}\left|b(t,\,x)\right|<\infty\,.
\]
Furthermore, the operator $\mathcal{A}+\mathcal{R}$ is locally Lipschitz
\[
\left\Vert \mathcal{A}[b]+\mathcal{R}[b]-\mathcal{A}[\tilde{b}]-\mathcal{R}[\tilde{b}]\right\Vert \le C_{1}\left\Vert b-\tilde{b}\right\Vert 
\]
where $C_{1}=3C_{0}\left\Vert k_{a}\right\Vert _{\infty}+\left\Vert \mu\right\Vert _{\infty}$.\end{lem}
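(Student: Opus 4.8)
The statement bundles three claims: global well-posedness on $H=L^{1}(Q,\mathbb{R}^{+})$, a finite sup bound $C_{0}<\infty$ for the classical solution, and an explicit local Lipschitz estimate for $\mathcal{A}+\mathcal{R}$ (which, together with the analogous estimate for $\mathcal{B}$, is what will verify Condition~\ref{con:gLipschitz} for this example). I would organize the argument around the observation that $g=\mathcal{A}+\mathcal{B}+\mathcal{R}$ is a bounded bilinear-plus-linear map on $L^{1}$, so the whole problem fits a standard Picard/contraction framework once the quadratic aggregation term is controlled.

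For well-posedness, the plan is to recast (\ref{eq:9})--(\ref{eq:9end}) in mild form $b(t)=b_{0}+\int_{0}^{t}g(b(s))\,ds$ and verify that $g$ is locally Lipschitz on bounded subsets of $H$. The two linear operators are immediate: since $k_{f},\mu\in L^{\infty}(Q)$ and $\Gamma\in L^{\infty}(Q\times Q)$, both $\mathcal{B}$ and $\mathcal{R}$ are bounded on $L^{1}(Q)$, with $\|\mathcal{R}[b]-\mathcal{R}[\tilde{b}]\|\le\|\mu\|_{\infty}\|b-\tilde{b}\|$ and an analogous bound for $\mathcal{B}$ obtained by applying Fubini to the $\Gamma k_{f}$ kernel. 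The aggregation term is bilinear, and here I would use a product-difference (telescoping) identity together with Young's convolution inequality to obtain $\|\mathcal{A}[b]-\mathcal{A}[\tilde{b}]\|\le C\,(\|b\|+\|\tilde{b}\|)\,\|b-\tilde{b}\|$. Local existence and uniqueness of a mild solution then follow from the contraction mapping theorem; continuation to the whole compact interval $T$ follows from an a priori bound on the total floc number $\|b(t)\|$ (using that the loss terms are dissipative while the gain term is mass-controlling), which rules out finite-time blow-up. Nonnegativity of $b$ is preserved because $b_{0}\ge0$, the gain terms are nonnegative, and the loss terms act multiplicatively; a truncation or comparison argument makes this rigorous.

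Given a classical solution on $T$, the sup bound $C_{0}<\infty$ is then essentially automatic, since a classical solution is continuous on the compact set $T\times Q$ and hence bounded. If one prefers an a priori route, I would instead derive a differential inequality for $t\mapsto\|b(t,\cdot)\|_{\infty}$ directly from the mild formulation---bounding the gain integral by $\|k_{a}\|_{\infty}$ times the already-controlled $L^{1}$ mass and discarding the nonpositive loss contributions---and close it with Gronwall to obtain an explicit finite bound on $[0,t_{f}]$.

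The heart of the lemma is the final Lipschitz estimate with the explicit constant $C_{1}=3C_{0}\|k_{a}\|_{\infty}+\|\mu\|_{\infty}$, and this is the step I expect to demand the most care. The removal contribution is clean, $\|\mathcal{R}[b]-\mathcal{R}[\tilde{b}]\|\le\|\mu\|_{\infty}\|b-\tilde{b}\|$, supplying the $\|\mu\|_{\infty}$ term. For aggregation I would treat the gain and loss pieces separately. In each I insert the telescoping identity $ab-\tilde{a}\tilde{b}=a(b-\tilde{b})+\tilde{b}(a-\tilde{a})$ applied to the appropriate arguments, bound the kernel by $\|k_{a}\|_{\infty}$, and use the sup bound $|b|,|\tilde{b}|\le C_{0}$ on the factor that is \emph{not} paired with the difference, leaving that difference to be measured in $L^{1}$. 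The convolution gain term then contributes one factor of $C_{0}\|k_{a}\|_{\infty}$ after the factor $\frac{1}{2}$ is absorbed against its two symmetric summands, while the nonlocal loss term contributes two such factors (one per summand of the split); collecting the three quadratic contributions yields $3C_{0}\|k_{a}\|_{\infty}\|b-\tilde{b}\|$, and adding removal gives $C_{1}$. The main obstacle is precisely the bookkeeping in this bilinear estimate: one must apportion the $L^{\infty}$ bound $C_{0}$, the a priori mass bound, and the $L^{1}$ norm of the difference consistently across the convolution (gain) and the nonlocal product (loss) so that no extraneous factors (e.g.\ of $\overline{x}$ or $\|b\|_{L^{1}}$) survive and the constant lands exactly on $3C_{0}\|k_{a}\|_{\infty}$.
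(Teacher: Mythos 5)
Your proposal is correct and matches the paper's argument: the paper simply cites \cite[\S 3]{Bortz2008} for well-posedness and the sup bound $C_{0}$ (your Picard/mild-solution sketch is the standard route taken there), and its Lipschitz estimate for $\mathcal{A}$ is exactly your telescoping-plus-Young computation, with the gain term contributing $\tfrac{1}{2}(\Vert b\Vert+\Vert\tilde{b}\Vert)\le C_{0}$ and the loss term $\Vert b\Vert+\Vert\tilde{b}\Vert\le2C_{0}$, summing to $3C_{0}\Vert k_{a}\Vert_{\infty}$. Your parenthetical worry about extraneous factors is actually well placed --- the paper silently bounds the $L^{1}$ norms $\Vert b\Vert,\Vert\tilde{b}\Vert$ by the sup-norm constant $C_{0}$, which strictly speaking costs a factor of $\overline{x}$ --- but that is a blemish in the paper, not in your approach.
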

\begin{proof}
For the proof of the first part we refer readers to \cite[\S 3]{Bortz2008}.
To show that $\mathcal{A}+\mathcal{R}$ is locally Lipschitz, first
observe that 
\begin{eqnarray*}
\left\Vert \mathcal{A}[b]-\mathcal{A}[\tilde{b}]\right\Vert  & \le & \frac{1}{2}\int_{Q}\left|\int_{0}^{x}k_{a}(x-y,\,y)b(x-y)b(y)\,dy\right.\\
 &  & \left.-\int_{0}^{x}k_{a}(x-y,\,y)\tilde{b}(x-y)\tilde{b}(y)\,dy\right|dx\\
 & \quad & +\int_{Q}\left|b(x)\int_{Q}k_{a}(x,\,y)b(y)\,dy-\tilde{b}(x)\int_{Q}k_{a}(x,\,y)\tilde{b}(y)\,dy\right|\,dx\\
 & \le & \left\Vert k_{a}\right\Vert _{\infty}\left[\frac{1}{2}\int_{Q}\left|\int_{0}^{x}b(x-y)\left(b(y)-\tilde{b}(y)\right)\,dy\right|\,dx\right.\\
 & \quad & +\frac{1}{2}\int_{Q}\left|\int_{0}^{x}\tilde{b}(y)\left(b(x-y)-\tilde{b}(x-y)\right)\,dy\right|\,dx\\
 & \quad & +\int_{Q}\left|b(x)\int_{Q}\left(b(y)-\tilde{b}(y)\right)\,dy\right|\,dx\\
 & \quad & \left.+\int_{Q}\left|\tilde{b}(x)\int_{Q}\left(b(y)-\tilde{b}(y)\right)\,dy\right|\,dx\right]\,.
\end{eqnarray*}
At this point applying Young's inequality \cite[Theorem 2.24]{Adams2003}
for the first two integrals yields the desired result
\begin{eqnarray*}
\left\Vert \mathcal{A}[b]-\mathcal{A}[\tilde{b}]\right\Vert  & \le & \left\Vert k_{a}\right\Vert _{\infty}\left[\frac{1}{2}\left\Vert b\right\Vert \left\Vert b-\tilde{b}\right\Vert +\frac{1}{2}\left\Vert \tilde{b}\right\Vert \left\Vert b-\tilde{b}\right\Vert \right.\\
 &  & \left.+\left\Vert b\right\Vert \left\Vert b-\tilde{b}\right\Vert +\left\Vert \tilde{b}\right\Vert \left\Vert b-\tilde{b}\right\Vert \right]\\
 & \le & 3C_{0}\left\Vert k_{a}\right\Vert _{\infty}\left\Vert b-\tilde{b}\right\Vert \,.
\end{eqnarray*}

\end{proof}
The above lemma establishes that the classical solution of (\ref{eq:9})-(\ref{eq:9end})
is bounded on $T\times Q$. Moreover, since the space of Riemann integrable
functions are dense on $L^{1}(Q,\,\mathbb{R}^{+})$, we tacitly assume
that the classical solution is also Riemann integrable. Therefore,
the evolution equation (\ref{eq:9})-(\ref{eq:9end}) satisfies consistency
conditions of Theorem \ref{thm:Consistency of the estimator}, and
thus the inverse problem defined in (\ref{eq:OrigInv}) is well-posed
for this particular application.
\begin{lem}
The fragmentation operator $\mathcal{B}$ satisfies the locally Lipschitz
property of Condition~\ref{con:gLipschitz}.\end{lem}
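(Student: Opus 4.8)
The plan is to isolate the two sources of variation in $\mathcal{B}[b;F]$ by a triangle inequality,
\[
\|\mathcal{B}[b;F]-\mathcal{B}[\tilde b;\tilde F]\|\le\|\mathcal{B}[b;F]-\mathcal{B}[\tilde b;F]\|+\|\mathcal{B}[\tilde b;F]-\mathcal{B}[\tilde b;\tilde F]\|,
\]
bounding the first summand by $C\|b-\tilde b\|$ and showing the second defines an admissible $\mathscr{T}(F,\tilde F)$. Since the loss term $-\tfrac12 k_f(x)p(x)$ carries no dependence on $F$, it cancels entirely in the second summand and contributes only to the first; all of the $F$-dependence of $\mathcal{B}$ lives in the gain term $\int_x^{\overline x}\Gamma(x;y)k_f(y)p(y)\,dy$, where $\Gamma(\cdot;y)$ is the density of the measure $F(\cdot,y)$.

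For the first summand I would write the difference explicitly, take the $L^1$ norm, and bound the gain contribution by Fubini. Switching the order of integration turns $\int_0^{\overline x}\int_x^{\overline x}\Gamma(x;y)k_f(y)|b(y)-\tilde b(y)|\,dy\,dx$ into $\int_0^{\overline x}k_f(y)|b(y)-\tilde b(y)|\bigl(\int_0^y\Gamma(x;y)\,dx\bigr)\,dy$, and the inner integral equals $1$ because $\Gamma(\cdot;y)\ge0$ is a probability density supported on $[0,y]$. Together with the $\tfrac12\|k_f\|_\infty$ from the loss term this yields $\|\mathcal{B}[b;F]-\mathcal{B}[\tilde b;F]\|\le\tfrac32\|k_f\|_\infty\|b-\tilde b\|$, so $C=\tfrac32\|k_f\|_\infty$ is the desired Lipschitz constant. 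This step is routine and parallels the treatment of $\mathcal{A}+\mathcal{R}$ in the preceding lemma.

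The second summand is the heart of the lemma. Here
\[
\mathcal{B}[\tilde b;F]-\mathcal{B}[\tilde b;\tilde F]=\int_x^{\overline x}\bigl(\Gamma(x;y)-\tilde\Gamma(x;y)\bigr)k_f(y)\tilde b(y)\,dy,
\]
and I would set $\mathscr{T}(F,\tilde F)$ equal to the $L^1$ norm of this function, which is finite because $\Gamma,\tilde\Gamma\in L^\infty(Q\times Q)$ and $|\tilde b|\le C_0$ by the preceding lemma. To connect this norm to the Prohorov metric I would pass to the dual formulation: for $\|\psi\|_\infty\le1$, Fubini gives
\[
\int_0^{\overline x}\psi(x)\bigl(\mathcal{B}[\tilde b;F]-\mathcal{B}[\tilde b;\tilde F]\bigr)(x)\,dx=\int_0^{\overline x}k_f(y)\tilde b(y)\Bigl(\int_0^y\psi(x)\,dF(x,y)-\int_0^y\psi(x)\,d\tilde F(x,y)\Bigr)dy,
\]
and the inner bracket is exactly the pairing of a test function against the two measures $F(\cdot,y),\tilde F(\cdot,y)$, which for continuous bounded $\psi$ tends to $0$ uniformly in $y$ as $\rho(F,\tilde F)\to0$. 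This is precisely the mechanism by which weak (Prohorov) convergence should force $\mathscr{T}(F,\tilde F)\to0$.

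The main obstacle is that recovering the full $L^1$ norm from this pairing requires the test function $\psi=\mathrm{sgn}\bigl(\mathcal{B}[\tilde b;F]-\mathcal{B}[\tilde b;\tilde F]\bigr)$, which is only bounded and not continuous, whereas the Prohorov metric controls $\int\psi\,dF-\int\psi\,d\tilde F$ only for continuous bounded $\psi$; indeed, weak convergence of measures together with a uniform $L^\infty$ bound on the densities does not by itself imply $L^1$ convergence of those densities. To close this gap I would exploit the structure already built into $\mathscr{F}$: the admissible $F$ are piecewise absolutely continuous with finitely many discontinuities in $x$, so for a convergent sequence $F_i\to F$ one can approximate $\mathrm{sgn}(\cdot)$ by continuous functions off a set of arbitrarily small measure and control the remaining error through the uniform bound $\|\Gamma\|_\infty$, obtaining $\mathscr{T}(F_i,F)\to0$. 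Establishing this final estimate rigorously — upgrading the uniform weak convergence to $L^1$ convergence of the gain term — is the delicate point, and I expect it to consume most of the proof; the verification that $\mathscr{T}$ is finite and that the combined bound has the form demanded by Condition~\ref{con:gLipschitz} is then immediate.
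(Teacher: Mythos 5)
Your decomposition is the same one the paper uses: split off the loss term and the $(b-\tilde b)$-part of the gain term into a Lipschitz-in-$b$ contribution, and collect all of the $F$-dependence into a remainder $\mathscr{T}(F,\tilde F)$ given by the $L^{1}$ norm of $\int_{x}^{\overline{x}}k_{f}(y)\tilde b(y)\bigl(\Gamma(x;y)-\tilde\Gamma(x;y)\bigr)\,dy$. Your treatment of the first summand is correct and, via Fubini and $\int_{0}^{y}\Gamma(x;y)\,dx=1$, even yields a cleaner constant ($\tfrac32\left\Vert k_{f}\right\Vert _{\infty}$) than the paper's $\left\Vert k_{f}\right\Vert _{\infty}\bigl(\tfrac12+\overline{x}\left\Vert \Gamma\right\Vert _{\infty}\bigr)$.

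The gap is the crucial claim that $\mathscr{T}(F,\tilde F)\to0$ as $\rho(F,\tilde F)\to0$: you correctly locate the difficulty (the Prohorov metric controls pairings against continuous test functions, whereas the $L^{1}$ norm needs $\psi=\mathrm{sgn}(\cdot)$), but you do not close it, and the repair you sketch cannot close it. The obstruction is not the finitely many discontinuities of a fixed $F\in\mathscr{F}$ but oscillation along the sequence: densities of the form $\Gamma_{n}(x;y)\propto\mathbbm{1}_{[0,y]}(x)\bigl(1+\sin(2\pi nx)\bigr)$ are smooth, uniformly bounded, lie in $\mathscr{F}$, and satisfy $\rho(F_{n},F)\to0$ with $F$ uniform, yet $\bigl\Vert \int_{x}^{\overline{x}}k_{f}(y)\tilde b(y)(\Gamma_{n}-\Gamma)\,dy\bigr\Vert \to\tfrac{2}{\pi}\int_{0}^{\overline{x}}\int_{x}^{\overline{x}}k_{f}(y)\tilde b(y)y^{-1}\,dy\,dx>0$, so no admissible $\mathscr{T}$ exists along this sequence. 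No amount of regularity of the individual members of $\mathscr{F}$ upgrades weak convergence to $L^{1}$ convergence of the gain term without an equicontinuity or compactness hypothesis on the densities that the paper does not impose. For what it is worth, the paper's own proof of this step bounds $\mathscr{T}$ by $C_{\text{frag}}\overline{x}C_{0}\sup_{y}\int_{Q}|dF_{y}-d\tilde F_{y}|$ and then asserts that $\int_{Q}|dF_{y}-d\tilde F_{y}|\to0$ is ``equivalent to'' $\rho_{Proh}(F_{y},\tilde F_{y})\to0$; that quantity is the total variation distance, which dominates but is not dominated by the Prohorov metric, so the paper asserts rather than proves exactly the step you left open. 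You have therefore put your finger on the genuinely delicate (and, as stated, unrepaired) point of the lemma; you have not resolved it, and the PAC-based route you propose would not resolve it either.
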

\begin{proof}
Examining the fragmentation term, we find 
\begin{eqnarray*}
\left\Vert \mathcal{B}(b,F)-\mathcal{B}(\tilde{b},\tilde{F})\right\Vert  & \leq & \left\Vert \frac{1}{2}k_{f}(x)\left(\tilde{b}(t,\,x)-b(t,\,x)\right)\right\Vert \\
 &  & +\left\Vert \int_{x}^{\overline{x}}k_{f}(y)\left(b(t,\,y)\,\Gamma(x,y)-\tilde{b}(t,\,y)\,\tilde{\Gamma}(x,y)\right)dy\right\Vert \\
 & \leq & \frac{1}{2}C_{\text{frag}}\left\Vert b-\tilde{b}\right\Vert +K\left\Vert \int_{Q}b(t,\,y)\left(\Gamma(x,y)-\tilde{\Gamma}(x,y)\right)dy\right\Vert \\
 &  & +C_{\text{frag}}\left\Vert \int_{Q}\left(b(t,\,y)-\tilde{b}(t,\,y)\right)\tilde{\Gamma}(x,y)\,dy\right\Vert 
\end{eqnarray*}

\noindent where $C_{\text{frag}}=\left\Vert k_{f}\right\Vert _{\infty}$.
The second term on the right hand side becomes 
\begin{eqnarray*}
C_{\text{frag}}\left\Vert \int_{Q}b(t,\,y)\left(\Gamma(x,y)-\tilde{\Gamma}(x,y)\right)dy\right\Vert  & \leq & C_{\text{frag}}\int_{Q}\int_{Q}\left|b(t,\,y)\right|\left|\Gamma(x,y)-\tilde{\Gamma}(x,y)\right|dy\,dx\\
 & \leq & C_{\text{frag}}\int_{Q}\left|b(t,\,y)\right|\int_{Q}\left|\Gamma(x,y)dx-\tilde{\Gamma}(x,y)dx\right|dy\\
 & \leq & C_{\text{frag}}\int_{Q}\left|b(t,\,y)\right|\left(\int_{Q}\left|dF_{y}-d\tilde{F}_{y}\right|\right)dy\\
 & \le & C_{\text{frag}}\sup_{y\in Q}\int_{Q}\left|dF_{y}-d\tilde{F}_{y}\right|\int_{Q}\left|b(t,\,y)\right|dy\\
 & \le & C_{\text{frag}}\overline{x}C_{0}\sup_{y\in Q}\int_{Q}\left|dF_{y}-d\tilde{F}_{y}\right|
\end{eqnarray*}
 Since $\int_{Q}\left|dF_{y}-d\tilde{F}_{y}\right|\to0$ is equivalent
to $\rho_{Proh}(F_{y},\tilde{F}_{y})\to0$ , we know that 
\[
\sup_{y\in Q}\int_{Q}\left|dF_{y}-d\tilde{F}_{y}\right|\to0\quad\text{as}\quad\rho(F,\tilde{F})\to0.
\]
 Therefore, 
\[
C_{\text{frag}}\left\Vert \int_{Q}b(y)\left(\Gamma(x,y)-\tilde{\Gamma}(x,y)\right)dy\right\Vert \to0\quad\text{as}\quad\rho(F,\,\tilde{F})\to0.
\]
 Similar analysis for the third term leads to the bound 
\[
C_{\text{frag}}\left\Vert \int_{Q}\left(b(y)-\tilde{b}(t,y)\right)\tilde{\Gamma}(x,y)\,dy\right\Vert \leq C_{\text{frag}}\overline{x}\left\Vert \Gamma\right\Vert _{\infty}\left\Vert b-\tilde{b}\right\Vert .
\]
 Combining these results we find the overall fragmentation term can
be bounded by 
\[
\left\Vert \mathcal{B}(b,\phi)-\mathcal{B}(\tilde{b},\tilde{\phi})\right\Vert \leq C_{\text{frag}}\left(\frac{1}{2}+\overline{x}\left\Vert \Gamma\right\Vert _{\infty}\right)\left\Vert b-\tilde{b}\right\Vert +\mathscr{T}(F,\,\tilde{F}).
\]
\end{proof}
\begin{claim}
\label{claim:gLipschitz}The function $g$ satisfies the locally Lipschitz
property of Condition~\ref{con:gLipschitz}.\end{claim}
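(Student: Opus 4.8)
The plan is to obtain the estimate for $g$ by simply adding together the two Lipschitz-type bounds established in the two preceding lemmas. Recall that, by definition, $g(b,F)=\mathcal{A}[b]+\mathcal{B}[b;F]+\mathcal{R}[b]$, and that among the three operators only the fragmentation operator $\mathcal{B}$ carries the dependence on the measure $F$ (through the post-fragmentation density $\Gamma$). Accordingly, I would first split
\[
g(b,F)-g(\tilde b,\tilde F)=\bigl(\mathcal{A}[b]+\mathcal{R}[b]-\mathcal{A}[\tilde b]-\mathcal{R}[\tilde b]\bigr)+\bigl(\mathcal{B}[b;F]-\mathcal{B}[\tilde b;\tilde F]\bigr),
\]
and apply the triangle inequality in the $H=L^{1}(Q,\mathbb{R}^{+})$ norm to separate the aggregation--removal contribution from the fragmentation contribution.

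Next, I would invoke the first lemma to bound the first bracket by $C_{1}\Vert b-\tilde b\Vert$ with $C_{1}=3C_{0}\Vert k_{a}\Vert_{\infty}+\Vert\mu\Vert_{\infty}$, and the second lemma to bound the second bracket by $C_{\text{frag}}\bigl(\tfrac12+\overline{x}\Vert\Gamma\Vert_{\infty}\bigr)\Vert b-\tilde b\Vert+\mathscr{T}(F,\tilde F)$. Summing the two estimates and collecting the coefficients of $\Vert b-\tilde b\Vert$ into
\[
C=3C_{0}\Vert k_{a}\Vert_{\infty}+\Vert\mu\Vert_{\infty}+C_{\text{frag}}\Bigl(\tfrac12+\overline{x}\Vert\Gamma\Vert_{\infty}\Bigr)
\]
produces exactly the form demanded by Condition~\ref{con:gLipschitz}, with the measure-dependent remainder $\mathscr{T}(F,\tilde F)$ carried over verbatim from the fragmentation lemma. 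That remainder already satisfies $|\mathscr{T}(F,\tilde F)|<\infty$ and $\mathscr{T}(F,\tilde F)\to0$ as $\rho(F,\tilde F)\to0$, so no further argument about its decay is needed.

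Since the two lemmas have already done the analytical work --- in particular the delicate handling of the $F$-dependent term, which exploits the equivalence between convergence in the Prohorov metric and $\sup_{y}\int_{Q}|dF_{y}-d\tilde F_{y}|\to0$ --- this final step is essentially bookkeeping via the triangle inequality, and I do not anticipate a genuine obstacle. The only point worth verifying is that the overall constant $C$ is finite and independent of the particular measures $F,\tilde F$; this holds because $C$ depends only on the uniform a priori bound $C_{0}=\sup_{t,x}|b(t,x)|$ guaranteed by the first lemma together with the kernel norms $\Vert k_{a}\Vert_{\infty},\Vert k_{f}\Vert_{\infty},\Vert\mu\Vert_{\infty},\Vert\Gamma\Vert_{\infty}$, all finite under the standing assumptions $k_{f},\mu\in L^{\infty}(Q)$ and $k_{a},\Gamma\in L^{\infty}(Q\times Q)$. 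With $C>0$ and $\mathscr{T}$ vanishing in the $\rho$ metric, Condition~\ref{con:gLipschitz} is satisfied and the claim follows.
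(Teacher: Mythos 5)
Your proposal is correct and follows essentially the same route as the paper: split $g(b,F)-g(\tilde b,\tilde F)$ by the triangle inequality into the aggregation--removal part and the fragmentation part, apply the two preceding lemmas, and sum the constants to get $C=C_{1}+C_{\text{frag}}\left(\frac{1}{2}+\overline{x}\left\Vert \Gamma\right\Vert _{\infty}\right)$ with the remainder $\mathscr{T}(F,\tilde F)$ carried over from the fragmentation lemma. Your closing remarks on the finiteness and $F$-independence of $C$ are a harmless (and sensible) addition beyond what the paper writes explicitly.
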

\begin{proof}
Consider 
\begin{eqnarray*}
\left\Vert g(b,F)-g(\tilde{b},\tilde{F})\right\Vert  & = & \left\Vert \mathcal{A}[b]-\mathcal{A}[\tilde{b}]+\mathcal{B}[b;F]-\mathcal{B}[\tilde{b};\tilde{F}]+\mathcal{R}[b]-\mathcal{R}[\tilde{b}]\right\Vert \\
 & \leq & \left\Vert \mathcal{A}[b]-\mathcal{A}[\tilde{b}]\right\Vert +\left\Vert \mathcal{B}[b;F]-\mathcal{B}[\tilde{b};\tilde{F}]\right\Vert +\left\Vert \mathcal{R}[b]-\mathcal{R}[\tilde{b}]\right\Vert \,.
\end{eqnarray*}

\noindent Using the Lipschitz constants from the fragmentation and
aggregation terms, 
\[
\left\Vert g(b,\phi)-g(\tilde{b},\tilde{\phi})\right\Vert \leq C\left\Vert b-\tilde{b}\right\Vert +\mathscr{T}(F,\,\tilde{F})
\]
 where $C=C_{\text{frag}}\left(\frac{1}{2}+\overline{x}\left\Vert \Gamma\right\Vert _{\infty}\right)+C_{1}$.
\end{proof}
\noindent Therefore, since the function $g$ satisfies Condition~\ref{con:gLipschitz},
we can conclude well-posedness of the inverse problem for identifying
the post-fragmentation probability density, $\Gamma(x,y)$, found
in the model for flocculation dynamics of bacterial aggregates described
in (\ref{eq:9})-(\ref{eq:9end}).

\subsection{Numerical Implementation}

We first form an approximation to $H$. We define basis elements
\[
\beta_{i}^{N}(x)=\left\{ \begin{array}{cc}
1;\, & x_{i-1}^{N}\leq x\leq x_{i}^{N}\,;\,i=1,\ldots,N\\
0;\, & \mbox{otherwise}
\end{array}\right.
\]
for positive integer $N$ and $\{x_{i}^{N}\}_{i=0}^{N}$ a uniform
partition of $[0,\overline{x}]=[x_{0}^{N},x_{N}^{N}]$, and $\Delta x=x_{j}^{N}-x_{j-1}^{N}$
for all $j$. The $\beta^{N}$ functions form an orthogonal basis
for the approximate solution space 
\[
H^{N}=\left\{ h\in H\;|\;h=\sum_{i=1}^{N}\alpha_{i}\beta_{i}^{N},\;\alpha_{i}\in\mathbb{R}\right\} ,
\]
 and accordingly, we define the orthogonal projections $\pi^{N}:\,H\mapsto H^{N}$
\[
\pi^{N}h=\sum_{j=1}^{N}\,\alpha_{j}\beta_{j}^{N},\qquad\text{where}\:\alpha_{j}=\frac{1}{\Delta x}\,\int_{x_{j-1}^{N}}^{x_{j}^{N}}\,h(x)\,dx.
\]
 Thus our approximating formulations of (\ref{eq:9}), (\ref{eq:9end})
becomes the following system of $N$ ODEs for $b^{N}\in H^{N}$ and
$F\in\mathscr{F}$:

\begin{eqnarray}
b_{t}^{N} & = & \pi^{N}\left(\mathcal{A}[b^{N}]+\mathcal{B}[b^{N};F]+\mathcal{R}[b^{N}]\right),\label{eq:9N}\\
b^{N}(0,x) & = & \pi^{N}b_{0}(x)\,,\label{eq:9Nend}
\end{eqnarray}
where {\footnotesize{}
\begin{eqnarray*}
\pi^{N}\mathcal{A}[b^{N}] & = & \left(\begin{array}{c}
-\alpha_{1}\sum_{j=1}^{N-1}k_{a}(x_{1},\,x_{j})\alpha_{j}\Delta x\\
\frac{1}{2}k_{a}(x_{1},x_{1})\alpha_{1}\alpha_{1}\Delta x-\alpha_{2}\sum_{j=1}^{N-2}k_{a}(x_{2},\,x_{j})\alpha_{j}\Delta x\\
\vdots\\
\frac{1}{2}\sum_{j=1}^{N-2}k_{a}(x_{j},x_{N-1-j})\alpha_{j}\alpha_{N-1-j}\Delta x-\alpha_{N-1}k_{a}(x_{N-1},\,x_{1})\alpha_{1}\Delta x\\
\frac{1}{2}\sum_{j=1}^{N-1}k_{a}(x_{j},x_{N-j})\alpha_{j}\alpha_{N-j}\Delta x
\end{array}\right)
\end{eqnarray*}
}and {\footnotesize{}
\begin{eqnarray*}
\pi^{N}\left(\mathcal{B}[b^{N};F]+\mathcal{R}[b^{N}]\right) & = & \left(\begin{array}{c}
\sum_{j=2}^{N}\Gamma(x_{1};x_{j})k_{f}(x_{j})\alpha_{j}\Delta x-\frac{1}{2}k_{f}(x_{1})\alpha_{1}-\mu(x_{1})\alpha_{1}\\
\sum_{j=3}^{N}\Gamma(x_{2};x_{j})k_{f}(x_{j})\alpha_{j}\Delta x-\frac{1}{2}k_{f}(x_{2})\alpha_{2}-\mu(x_{2})\alpha_{2}\\
\vdots\\
\Gamma(x_{N-1};x_{N})k_{f}(x_{N})\alpha_{N}\Delta x-\frac{1}{2}k_{f}(x_{N-1})\alpha_{N-1}-\mu(x_{N-1})\alpha_{N-1}\\
-\frac{1}{2}k_{f}(x_{N})\alpha_{N}-\mu(x_{N})\alpha_{N}
\end{array}\right)\,.
\end{eqnarray*}
}In the following lemma we show that the numerical scheme satisfies
Condition \ref{cond:gNLipschitz}. 
\begin{claim}
The function $g^{N}:H^{N}\times\mathscr{F}\to H^{N}$ as defined by
\[
g^{N}(b^{N},F)=\mathcal{A}[b^{N}]+\mathcal{B}[b^{N};F]+\mathcal{R}[b^{N}]
\]
satisfies the Lipschitz-type property in Condition~\ref{cond:gNLipschitz}.\end{claim}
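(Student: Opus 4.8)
The plan is to reproduce, on the finite-dimensional space $H^{N}$, exactly the estimates that established Claim~\ref{claim:gLipschitz} in the continuous case, since $g^{N}$ is assembled from the same operators $\mathcal{A}$, $\mathcal{B}$, $\mathcal{R}$, now evaluated at $b^{N}\in H^{N}$. First I would apply the triangle inequality to separate the $F$-independent part from the breakage part,
\[
\left\Vert g^{N}(b^{N},F)-g^{N}(\tilde{b}^{N},\tilde{F})\right\Vert \le \left\Vert \mathcal{A}[b^{N}]+\mathcal{R}[b^{N}]-\mathcal{A}[\tilde{b}^{N}]-\mathcal{R}[\tilde{b}^{N}]\right\Vert +\left\Vert \mathcal{B}[b^{N};F]-\mathcal{B}[\tilde{b}^{N};\tilde{F}]\right\Vert ,
\]
and treat the two groups exactly as in the two lemmas preceding Claim~\ref{claim:gLipschitz}. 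I would also remark that the cell-averaging projection $\pi^{N}$ appearing in (\ref{eq:9N})--(\ref{eq:9Nend}) is an $L^{1}$-contraction, since $\sum_{j}\left|\alpha_{j}\right|\Delta x\le\Vert h\Vert$, so whether or not $\pi^{N}$ is inserted in front of the operators the bounds below are unaffected.

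For the $F$-independent group, the first step is to secure a discrete analog of the uniform bound $C_{0}=\sup_{t,x}\left|b(t,x)\right|$. For fixed $N$, $b^{N}$ solves the finite ODE system (\ref{eq:9N})--(\ref{eq:9Nend}) whose right-hand side is polynomial in the coefficients $\alpha_{i}$ (bilinear from $\mathcal{A}$, linear from $\mathcal{B}+\mathcal{R}$); on the compact interval $T$ this yields a bounded solution, so there is a constant $C_{0}^{N}=\sup_{t\in T,\,x\in Q}\left|b^{N}(t,x)\right|<\infty$. With this in hand, the $L^{1}$ estimates of the continuous aggregation/removal lemma---Young's inequality applied to the convolution-type sums together with the direct bounds on the loss terms---carry over verbatim with $C_{0}$ replaced by $C_{0}^{N}$, giving $\Vert\mathcal{A}[b^{N}]+\mathcal{R}[b^{N}]-\mathcal{A}[\tilde{b}^{N}]-\mathcal{R}[\tilde{b}^{N}]\Vert\le C_{1}^{N}\Vert b^{N}-\tilde{b}^{N}\Vert$ with $C_{1}^{N}=3C_{0}^{N}\Vert k_{a}\Vert_{\infty}+\Vert\mu\Vert_{\infty}$.

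For the breakage group I would repeat the decomposition from the continuous breakage lemma: adding and subtracting $\tilde{b}^{N}\Gamma$ inside the integral splits $\mathcal{B}[b^{N};F]-\mathcal{B}[\tilde{b}^{N};\tilde{F}]$ into (i) a term Lipschitz in $b^{N}$ with constant $C_{\text{frag}}\left(\frac{1}{2}+\overline{x}\Vert\Gamma\Vert_{\infty}\right)$, and (ii) a term bounded by $C_{\text{frag}}\overline{x}C_{0}^{N}\sup_{y\in Q}\int_{Q}\left|dF_{y}-d\tilde{F}_{y}\right|$, which I would \emph{define} to be $\mathscr{T}^{N}(F,\tilde{F})$. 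Because $\int_{Q}\left|dF_{y}-d\tilde{F}_{y}\right|\to0$ is equivalent to $\rho_{Proh}(F_{y},\tilde{F}_{y})\to0$, this quantity tends to $0$ as $\rho(F,\tilde{F})\to0$, giving precisely the vanishing term required by Condition~\ref{cond:gNLipschitz}. Summing the two groups then yields the claim with $C_{N}=C_{\text{frag}}\left(\frac{1}{2}+\overline{x}\Vert\Gamma\Vert_{\infty}\right)+C_{1}^{N}$. The main obstacle is the discrete boundedness $C_{0}^{N}<\infty$: in the continuous case this was imported from \cite[\S 3]{Bortz2008}, whereas here it must be argued directly from the structure of the ODE system; once boundedness is secured, the remaining estimates are the finite-sum analogs of bounds already carried out and present no new difficulty.
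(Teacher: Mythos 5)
Your proposal follows essentially the same route as the paper's proof: a triangle inequality separating the aggregation/removal and breakage contributions, the observation that the cell-averaging projection $\pi^{N}$ has $L^{1}$-operator norm at most one, and then reuse of the Lipschitz constants and the vanishing term $\mathscr{T}$ from the two continuous-case lemmas, yielding the same form of $C_{N}$. The only substantive difference is that you explicitly flag the need for an a priori bound $C_{0}^{N}=\sup_{t,x}\lvert b^{N}\rvert<\infty$ where the paper silently reuses $C_{0}$; that is a fair point, but note that ``polynomial right-hand side on a compact interval'' does not by itself rule out finite-time blow-up, so the bound should instead be obtained from the assumed uniform convergence $b^{N}\to b$ of the scheme or from the nonnegativity and mass estimates in \cite{Bortz2008}.
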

\begin{proof}
\noindent We consider the integrand 
\[
\left\Vert \pi^{N}\left(\mathcal{A}[b^{N}(t,x;\tilde{F})]+\mathcal{B}[b^{N}(t,x;\tilde{F})]-\mathcal{A}[b^{N}(t,x;F)]-\mathcal{B}[b^{N}(t,x;F)]\right)\right\Vert \,,
\]
and note that 
\[
\begin{array}{cc}
\leq & \left\Vert \pi^{N}\right\Vert \left(\left\Vert \mathcal{A}[b(t,x;\tilde{F})]-\mathcal{A}[b^{N}(t,x;F)]\right\Vert \right.\\
 & \qquad\left.+\left\Vert \mathcal{B}[b^{N}(t,x;\tilde{F})]-\mathcal{B}[b^{N}(t,x;F)]\right\Vert \right)\,.
\end{array}
\]
The induced ($L^{1}$-) norm on the projection operator will not be
an issue as 
\begin{eqnarray*}
\left\Vert \pi^{N}\right\Vert  & = & \sup_{h\in H,\left\Vert h\right\Vert =1}\left\Vert \pi^{N}h\right\Vert \\
 & = & \sup_{h\in H,\left\Vert h\right\Vert =1}\left\Vert \sum_{j=1}^{N}\frac{\beta_{j}^{N}(\cdot)}{\Delta x}\int_{x_{j-1}^{N}}^{x_{j}^{N}}h(x)dx\right\Vert \\
 & = & 1\,.
\end{eqnarray*}
As illustrated in the proof of Claim \ref{claim:gLipschitz} above,
the bounding constants for $\mathcal{A}$ and $\mathcal{B}$ are $C_{\text{agg}}$
and $\frac{3}{2}C_{\text{frag}}$, respectively. 

\noindent Combining these results, 
\[
\left\Vert b^{N}(t,x;\tilde{F})-b^{N}(t,x;F)\right\Vert \leq C_{N}\left\Vert b^{N}(t,x;\tilde{F})-b^{N}(t,x;F)\right\Vert +\mathcal{T}^{N}(\tilde{F},F)
\]
where $\mathcal{T}^{N}(\tilde{F},F)=\int_{0}^{t_{f}}\pi^{N}\mathscr{T}(\tilde{F},F)ds$,
independent of $t$, and $C_{N}=\left\Vert k_{f}\right\Vert _{\infty}\left(\frac{1}{2}+\overline{x}\left\Vert \Gamma\right\Vert _{\infty}\right)+3C_{0}\left\Vert k_{a}\right\Vert _{\infty}+\left\Vert \mu\right\Vert _{\infty}\,.$ \end{proof}
\begin{cor}
The semi-discrete solutions to (\ref{eq:9N}) converge uniformly in
norm to the unique solution of (\ref{eq:9}) on a bounded time interval
as $N\to\infty$.\end{cor}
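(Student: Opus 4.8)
The plan is to invoke the classical consistency-plus-stability paradigm: recast both evolution equations in mild (integral) form and close the estimate with Gronwall's inequality. First I would write the continuous problem (\ref{eq:9})--(\ref{eq:9end}) and its semi-discretization (\ref{eq:9N})--(\ref{eq:9Nend}) as
\[
b(t)=b_{0}+\int_{0}^{t}g(b(s),F)\,ds,\qquad b^{N}(t)=\pi^{N}b_{0}+\int_{0}^{t}\pi^{N}g(b^{N}(s),F)\,ds,
\]
subtract them, and insert $\pm\,\pi^{N}g(b(s),F)$ inside the integral so that the integrand splits into a \emph{stability} contribution $\pi^{N}\left[g(b^{N}(s),F)-g(b(s),F)\right]$ and a \emph{consistency} contribution $\pi^{N}g(b(s),F)-g(b(s),F)$.

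For the stability term I would use $\|\pi^{N}\|=1$ (computed in the proof of the preceding claim) together with the Lipschitz bound of Claim~\ref{claim:gLipschitz}, taking the same $F$ in both slots so the $\mathscr{T}$-term vanishes, to obtain $\|\pi^{N}[g(b^{N},F)-g(b,F)]\|\le C\|b^{N}-b\|$. A subtlety here is that the Lipschitz constant inherited from the aggregation operator depends on a norm bound of \emph{both} its arguments; since one of them is $b^{N}$, I first need a bound $\sup_{N}\sup_{t\in T}\|b^{N}(t)\|<\infty$ that is uniform in $N$. This follows because $\pi^{N}$ is an $L^{1}$-contraction and $g$ has at most quadratic growth, so a Gronwall argument applied directly to $\|b^{N}(t)\|$ keeps it bounded on $[0,t_{f}]$ independently of $N$, whence $C$ may be taken $N$-independent. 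The consistency term is controlled by the strong convergence $\pi^{N}h\to h$ in $L^{1}$ for every $h\in H$, which holds because the cell-averaging operators $\pi^{N}$ are $L^{1}$-contractions converging to the identity on the dense subspace of step functions, hence on all of $H$ by a standard density argument.

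Collecting the two estimates gives
\[
\|b^{N}(t)-b(t)\|\le\varepsilon_{N}+C\int_{0}^{t}\|b^{N}(s)-b(s)\|\,ds,
\]
where $\varepsilon_{N}:=\|\pi^{N}b_{0}-b_{0}\|+\int_{0}^{t_{f}}\|\pi^{N}g(b(s),F)-g(b(s),F)\|\,ds$, and Gronwall's inequality then yields $\|b^{N}(t)-b(t)\|\le\varepsilon_{N}\,e^{Ct_{f}}$ for all $t\in T$, a bound uniform in $t$. It remains to show $\varepsilon_{N}\to0$. The initial-data error $\|\pi^{N}b_{0}-b_{0}\|\to0$ by the strong convergence above. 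For the consistency integral I would invoke dominated convergence: the integrand tends to $0$ pointwise in $s$, and it is dominated by $2\|g(b(s),F)\|$, which is integrable on $[0,t_{f}]$ because the first lemma gives $C_{0}=\sup_{t,x}|b(t,x)|<\infty$ and the kernels $k_{a},k_{f},\mu,\Gamma$ are bounded, so $\|g(b(s),F)\|$ is bounded uniformly in $s$.

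The main obstacle is twofold: securing the uniform-in-$N$ a priori bound on $\|b^{N}\|$ (needed to freeze the Lipschitz constant against the quadratic aggregation nonlinearity), and passing the projection error through the time integral, i.e.\ justifying the dominated-convergence step for the consistency term uniformly in $t$. Once both are in hand, the conclusion---uniform-in-time convergence $b^{N}(t,\cdot)\to b(t,\cdot)$ in $H$ as $N\to\infty$---is immediate, and it is precisely the uniform convergence of the scheme posited in Corollary~\ref{cor:bNFNtobF}.
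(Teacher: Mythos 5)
Your overall route is the same one the paper takes: rewrite the continuous and semi-discrete problems in mild (integral) form, split the difference into a stability part (controlled by $\left\Vert \pi^{N}\right\Vert =1$ and the Lipschitz property of $g$ from Claim~\ref{claim:gLipschitz}) and a consistency part (controlled by the strong convergence $\pi^{N}h\to h$ in $L^{1}$), and close with Gronwall. The paper's own proof is only a sketch of exactly this argument and defers the details to \cite{Ackleh1997,Bortz2008}, so your write-up is a legitimate expansion of it.

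There is, however, one step whose justification as written would fail: the uniform-in-$N$ a priori bound $\sup_{N}\sup_{t\in T}\left\Vert b^{N}(t)\right\Vert <\infty$, which you correctly identify as necessary to freeze the Lipschitz constant against the quadratic aggregation term. You derive it from ``$g$ has at most quadratic growth, so a Gronwall argument applied directly to $\left\Vert b^{N}(t)\right\Vert $ keeps it bounded.'' Gronwall does not close against a quadratic right-hand side: the comparison ODE $y'=cy^{2}$ blows up in finite time, so quadratic growth alone yields only a local-in-time bound, not one on all of $[0,t_{f}]$ independent of $N$. The bound is true, but for a structural reason rather than a growth-rate one: for nonnegative densities the aggregation operator does not increase total number, since $\int_{Q}\mathcal{A}[p]\,dx=-\frac{1}{2}\int_{Q}\int_{Q}k_{a}(x,y)p(x)p(y)\,dy\,dx\le0$, while the breakage and removal terms contribute at most $C\left\Vert p\right\Vert $ to the growth of the $L^{1}$ norm because $\int_{0}^{y}\Gamma(x,y)\,dx=1$; this converts the quadratic estimate into a linear one, and then Gronwall applies. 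To transfer this to $b^{N}$ you additionally need the semi-discrete scheme to preserve nonnegativity (or an equivalent invariant-region argument), which is precisely the content of the convergence analyses the paper cites. With that substitution your argument is complete.
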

\begin{proof}
From results in \cite{Bortz2008}, we can obtain semi-discrete solutions
$b^{N}$ to the forward problem that converge uniformly in norm to
the unique solution of (\ref{eq:9})-(\ref{eq:9end}) on a bounded
time interval as $N\to\infty$. 

For fixed $N$, we rewrite (\ref{eq:9N}) in integral form and consider 

\begin{eqnarray*}
\left\Vert b^{N}(t,x;F)-\pi^{N}b(t,x;F)\right\Vert  & \leq & \int_{0}^{t}\left\Vert \pi^{N}\left(\mathcal{R}[b^{N}(s,x;F)]-\mathcal{R}[b(s,x;F)]\right)\right\Vert ds\\
 &  & +\int_{0}^{t}\left\Vert \pi^{N}\left(\mathcal{A}[b^{N}(s,x;F)]+\mathcal{B}[b(s,x;F)]\right.\right.\\
 &  & \left.\left.-\mathcal{A}[b^{N}(s,x;F)]-\mathcal{B}[b(s,x;F]\right)\right\Vert ds.
\end{eqnarray*}
for $t\in T$. Our strategy is to use the fact that our discretized
version of $g$ is locally Lipschitz and then apply Gronwall's inequality.
We refer readers to \cite{Ackleh1997,Bortz2008} for the detailed
discussion about the convergence of the numerical scheme. 
\end{proof}

\subsection{Results}

\noindent \begin{flushleft}
As an initial investigation into the utility of this approach, we
applied the method to the problem of flocculation dynamics. In \cite{ByrneEtal2011},
we have shown that the post-fragmentation density function greatly
depends on the parent floc size. In particular, we found that the
resulting post-fragmentation density for large parent flocs resembles
a Beta distribution with $\alpha=\beta=0.5$. For small flocs, however,
the resulting density resembles a Beta distribution with $\alpha=\beta=2$.
Towards this end, we applied the framework presented in this paper
to these two different post-fragmentation functions. In Figure \ref{fig:evidence},
data were generated from the forward problem by assuming a post-fragmentation
density function, 
\[
\Gamma_{\text{true}}(x,y)=\mathbbm{1}_{[0,\,y]}(x)\frac{6x(y-x)}{y^{3}}\,.
\]
 Similarly, in Figure \ref{fig:evidence-1}, data were generated with
a post-fragmentation density function,
\[
\Gamma_{\text{true}}(x,y)=\mathbbm{1}_{[0,\,y]}(x)\frac{1}{\pi\sqrt{x(y-x)}}\,.
\]
\begin{figure}[ht]
\begin{centering}
\subfloat[\label{fig:ftrue_beta}]{\protect\includegraphics[scale=0.4]{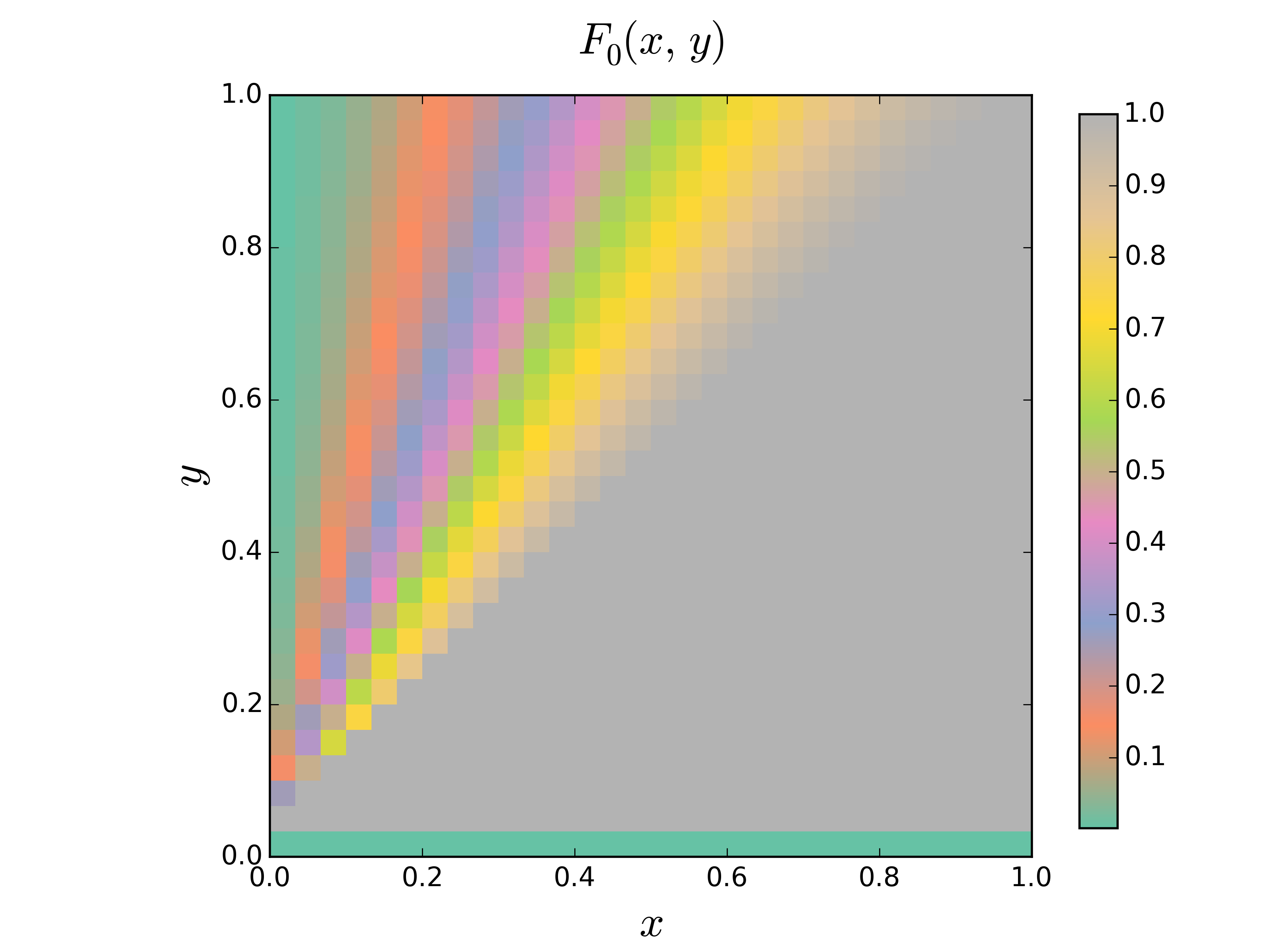}}~\subfloat[\label{fig:f_fit_beta}]{\protect\includegraphics[scale=0.4]{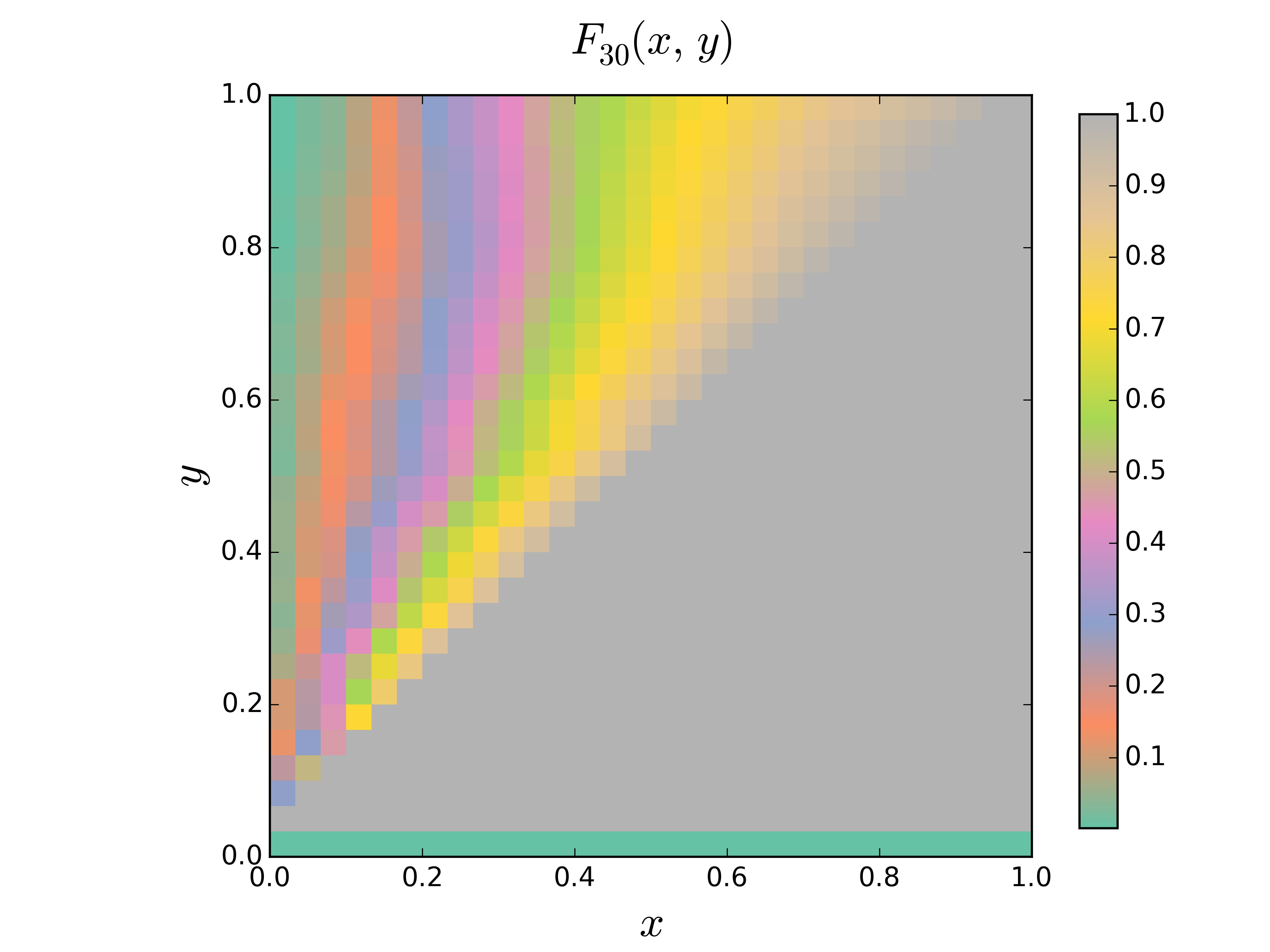}

}\\
\subfloat[\label{fig:abs_error_beta}]{\protect\includegraphics[scale=0.4]{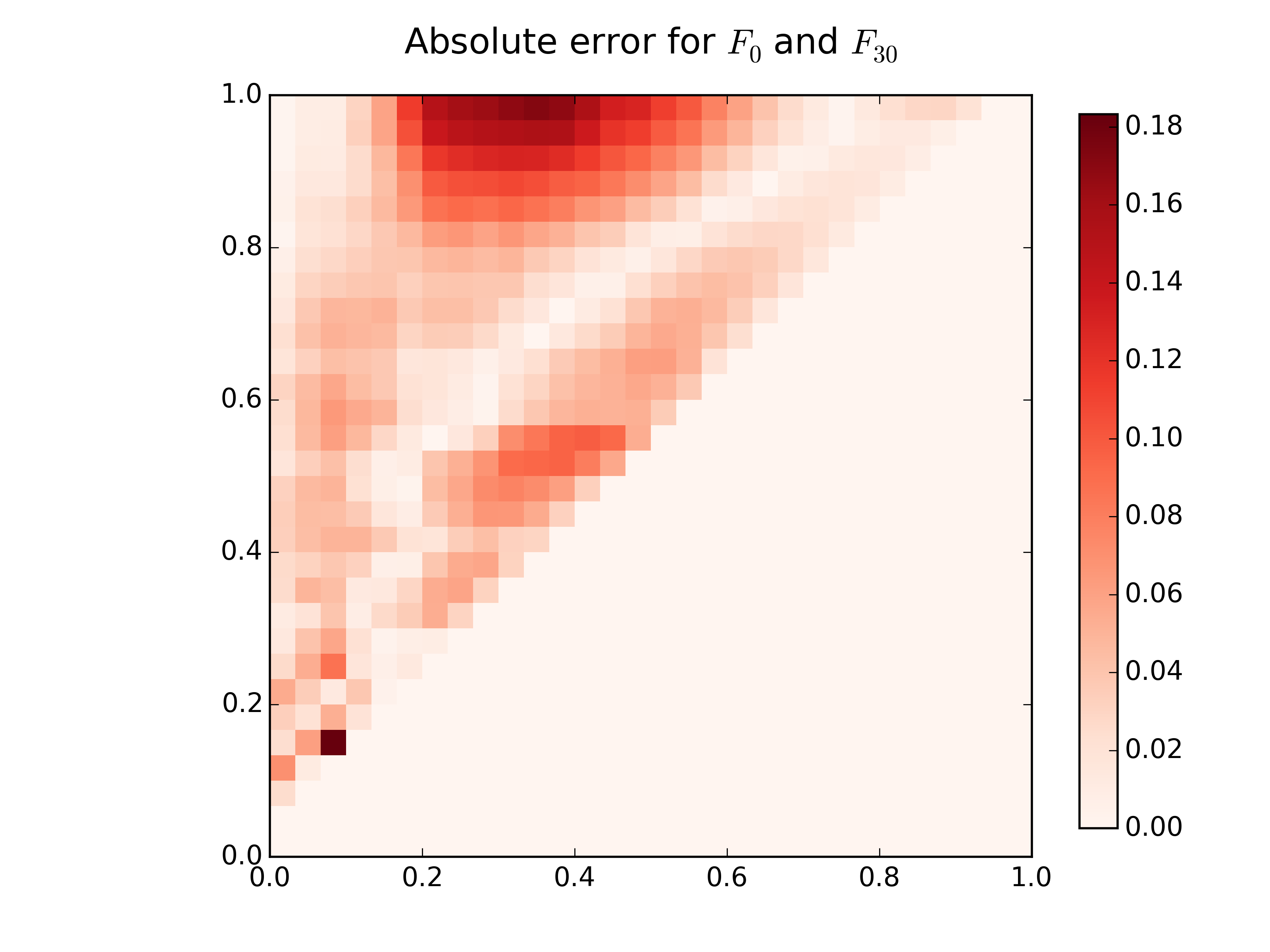}}~\subfloat[\label{fig:error_beta_N}]{\protect\includegraphics[scale=0.4]{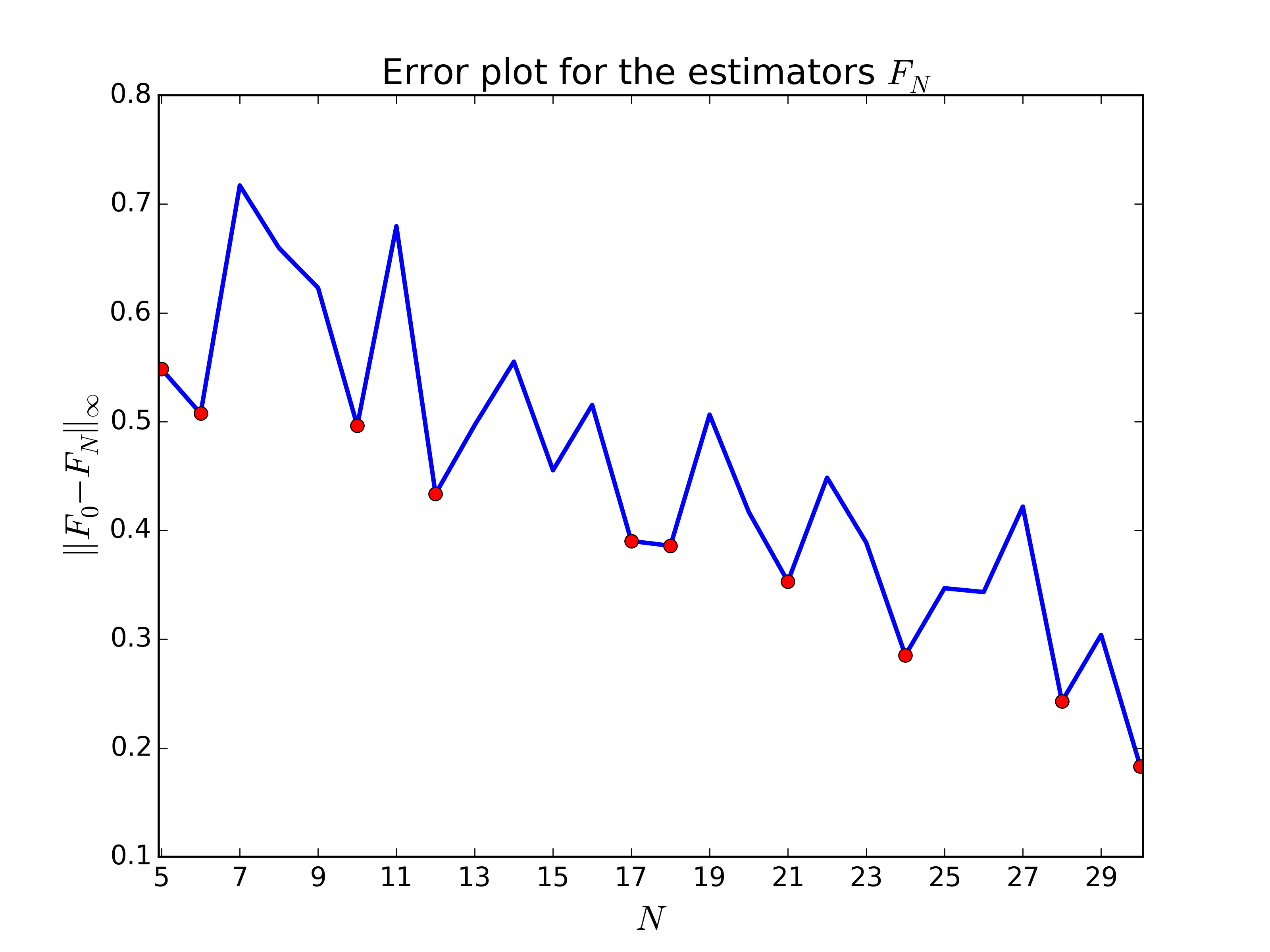}

}
\par\end{centering}

\centering{}\protect\caption{\label{fig:evidence}Simulation results for pseudo-data generated
with Beta distribution with $\alpha=\beta=2$ (a) Post-fragmentation
density used to generate test data (\emph{true} post-fragmentation
density). (b) Results of the optimization scheme based on these data
and an initial density function uniform in $x$. (c) Absolute error
for true probability measure $F_{0}$ and the approximate estimator
$F_{30}$. (d) Error plots for the sequence of estimators $\left\{ F_{N}\right\} _{N=5}^{30}$
. Solid red dots indicate the convergent subsequence of the estimators. }
\end{figure}

\par\end{flushleft}

\noindent \begin{flushleft}
\begin{figure}[ht]
\begin{centering}
\subfloat[\label{fig:ftrue_arcsin}]{\protect\includegraphics[scale=0.4]{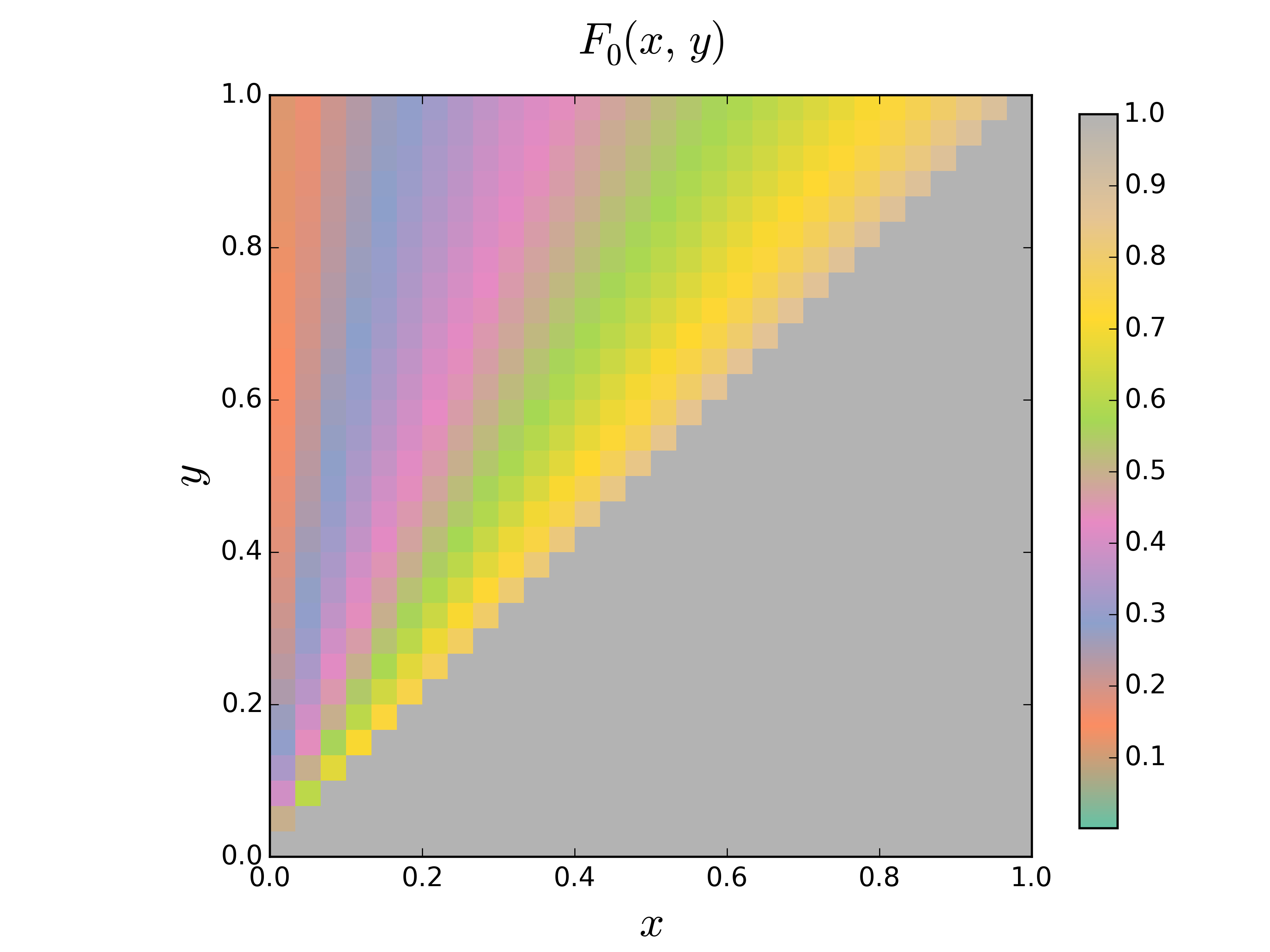}}~\subfloat[\label{fig:f_fit_arcsin}]{\protect\includegraphics[scale=0.4]{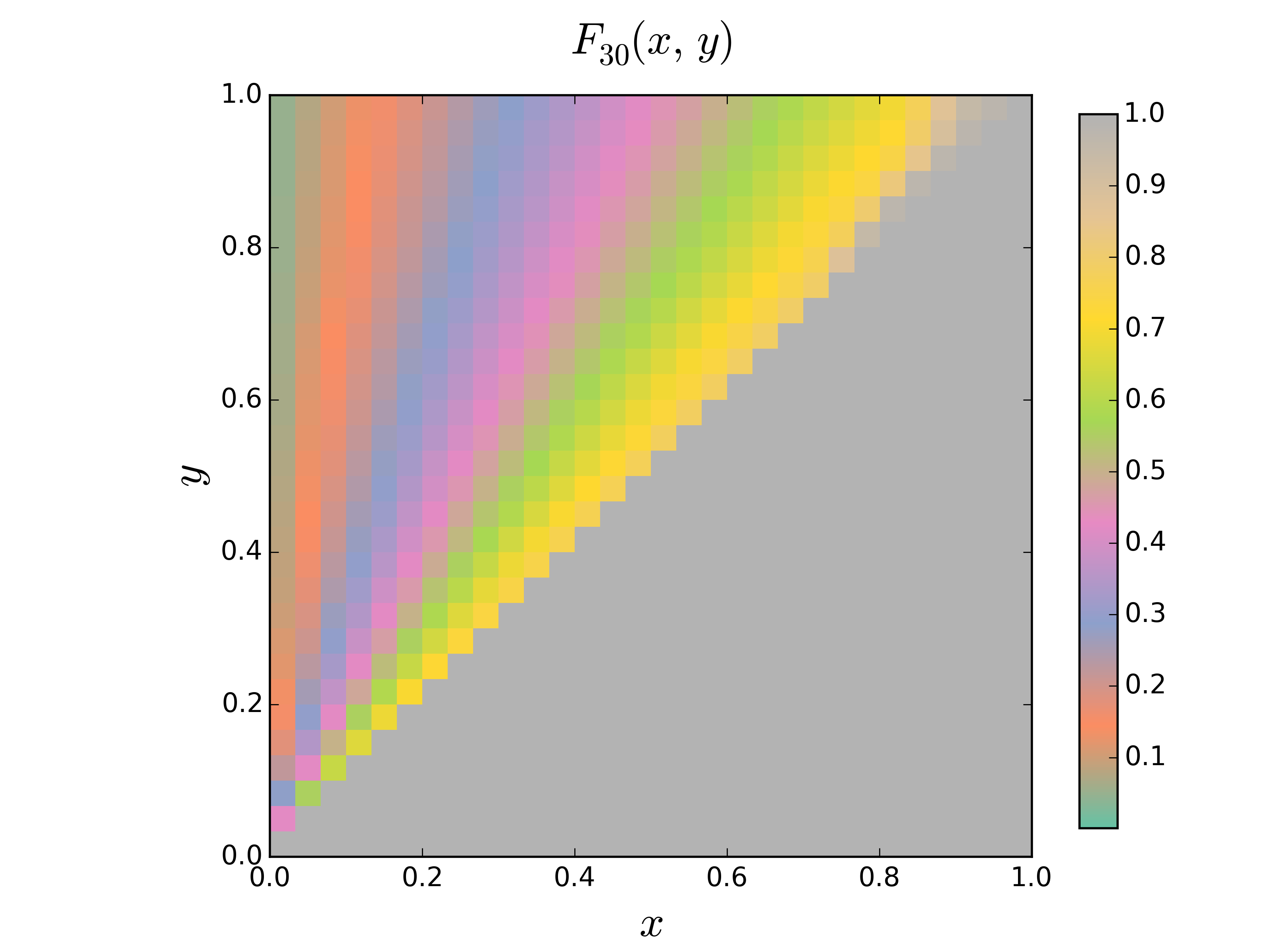}

}\\
\subfloat[\label{fig:abs_error_arcsin}]{\protect\includegraphics[scale=0.4]{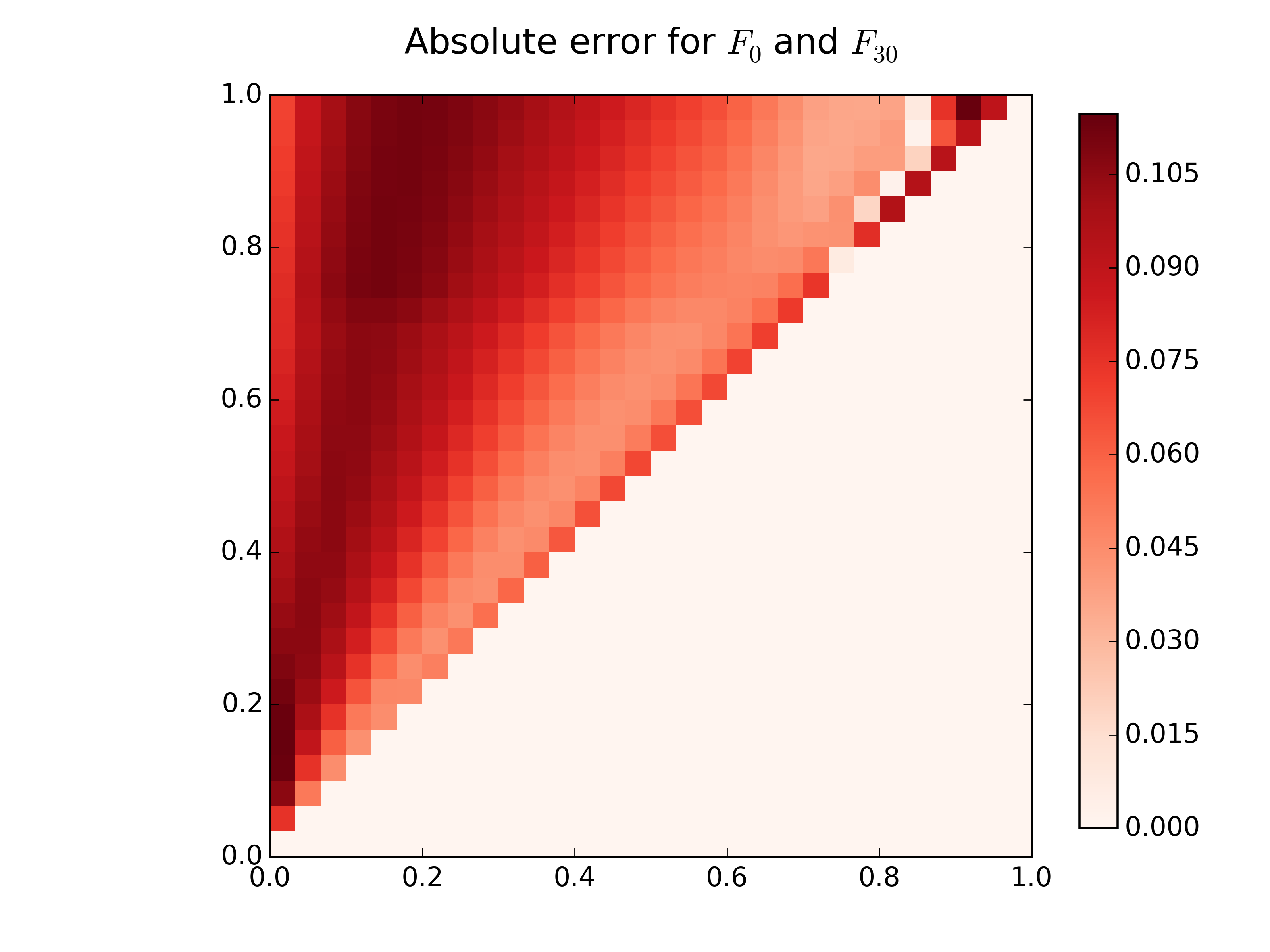}}~\subfloat[\label{fig:error_arcsin_N}]{\protect\includegraphics[scale=0.4]{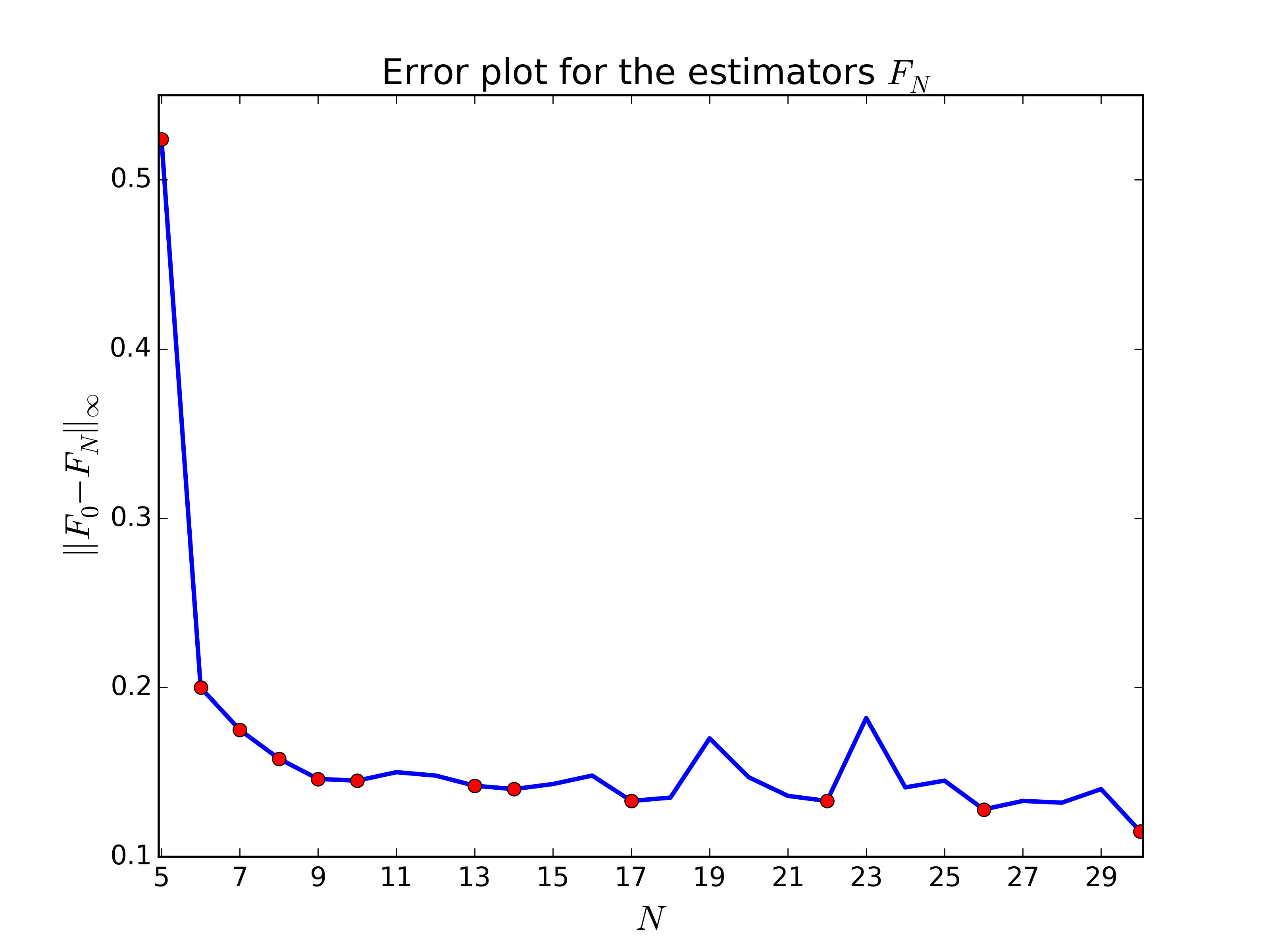}

}
\par\end{centering}

\centering{}\protect\caption{\label{fig:evidence-1}Simulation results for pseudo-data generated
with Beta distribution with $\alpha=\beta=0.5$ (a) Post-fragmentation
density used to generate test data. (b) Results of the optimization
scheme based on these data and an initial density function uniform
in $x$. (c) Absolute error for true probability measure $F_{0}$
and the approximate estimator $F_{30}$. (d) Error plots for the sequence
of estimators $\left\{ F_{N}\right\} _{N=5}^{30}$ . Solid red dots
indicate the convergent subsequence of the estimators. }
\end{figure}
To describe the aggregation within a laminar shear field (orthokinetic
aggregation) we used the kernel, 
\[
k_{a}(x,\,y)=10^{-6}\left(x^{1/3}+y^{1/3}\right)^{3}\,,
\]
proposed by von Smoluchowski \cite{Smoluchowski1917}. As in \cite{Bortz2008,AcklehFitzpatrick1997,Mirzaev2015a,Mirzaev2015b}
we assume that the breakage and removal rate of a floc of volume $x$
is proportional to its size,
\[
k_{f}(x)=10^{-1}x^{1/3}\quad\mu(x)=10^{-1}x^{1/3}\,.
\]
We also note that constants for the rate functions were chosen to
emphasize the fragmentation as a driving factor. The simulations were
run with initial size-distribution $b_{0}(x)=10^{3}\exp(-x)$ on $Q=[0,\,1]$
for $t\in[0,\,1]$. These data serve as the ``observed'' data $\mathbf{n}^{d}$.
Nonlinear constrained optimization employing the sequential least
squares algorithm as implemented in Python \texttt{fmin\_slsqp} was
used to minimize the cost functional in (\ref{eq:ApproxInv}). The
optimization was seeded with an initial density comprised of a uniform
density in $x$ for fixed $y$. Naturally, we constrained $\Gamma(\cdot,y)$
to be a probability density for each fixed $y$, i.e.,
\[
\int_{0}^{y}\Gamma(x;\,y)\,dx=1\text{ for all }y\in(0,\,\overline{x}].
\]
Our discretization uses $M=L=N_{x}=N$. We found that having more
comparison points in time was critical to observe the expected error
convergence in N. To illustrate this effect, we let $N_{t}=N+40$.
The result of the optimization for $N=30$, $F_{30}(x,y)$, is shown
in Figure~\ref{fig:evidence}. Absolute error for the \emph{true}
probability measure $F_{0}$ and the approximate estimator $F_{30}$
is depicted in Figure~ \ref{fig:abs_error_beta}. Convergence in
Prohorov metric implies the uniform convergence of probability measures
\cite{Gibbs2009}, i.e., 
\[
\sup_{(x,\,y)\in Q\times Q}\left|F(x,\,y)-\tilde{F}(x,\,y)\right|\le\rho(F,\,\tilde{F})\,.
\]
Towards this end, in Figure \ref{fig:error_beta_N}, we have illustrated
error plots for the sequence of estimators $\left\{ F_{N}\right\} _{N=5}^{30}$
. As it has been predicted in Theorem \ref{thm:Exist/conv of orig =000026 approx mins}
the sequence of the estimators has a subsequence (indicated by solid
red dots) which is convergent to \emph{true} probability measure $F_{0}$.
Our results for larger $N$ confirm that trend continues for $N>30$. 
\par\end{flushleft}

\section{Concluding Remarks}

Our efforts here are motivated by a class of mathematical models which
characterize a random process, such as fragmentation, by a probability
distribution. We are concerned with the inverse problem for inferring
the probability distribution, and present the specific problem for
the flocculation dynamics of aggregates in suspension which motivated
this study. We then developed the mathematical framework in which
we establish well-posedness of the inverse problem for inferring the
probability distribution. We also include results for overall method
stability for numerical approximation, confirming a computationally
feasible methodology. Finally, we verify that our motivating example
in flocculation dynamics conforms to the developed framework, and
illustrate its utility by identifying a sample distribution.

We originally proposed the flocculation model in \cite{Bortz2008}
and this work is one piece of a larger effort aimed at pushing the
boundaries for identifying microscale phenomena from size-structured
population measurements. In particular, we are interested in fragmentation.
The model proposed in \cite{ByrneEtal2011} uses knowledge of the
hydrodynamics to predict a breakage event and thus the post fragmentation
density $\Gamma$. With this work, we now have a tool to bridge the
gap between the experimental and modeling efforts for fragmentation.
And, a future paper will focus on using experimental evidence to validate
(or refute) our proposed fragmentation model.

\ack{}{The authors would like to thank J.~G.~Younger (University of Michigan)
for insightful discussions. This research was supported in part by
grants NIH-NIBIB-1R01GM081702-01A2, NSF-DMS-1225878, and DOD-AFOSR-FA9550-09-1-0404.}

\section*{References}{}

\bibliographystyle{siam}
\bibliography{bib_file}

\end{document}